\newcommand{\R}{\mathbb{R}}
\renewcommand{\S}{\mathbb{S}}
\newcommand{\T}{\mathbb{T}}
\newcommand{\Z}{\mathbb{Z}}
\newcommand{\cA}{\mathcal{A}}
\newcommand{\cL}{\mathcal{L}}
\newcommand{\cO}{\mathcal{O}}
\newcommand{\cT}{\mathcal{T}}
\renewcommand{\a}{\alpha}
\renewcommand{\b}{\beta}
\newcommand{\g}{\gamma}
\renewcommand{\d}{\delta}
\newcommand{\e}{\varepsilon}
\newcommand{\s}{\sigma}
\newcommand{\na}{\nabla}
\renewcommand{\r}{\rho}
\newcommand{\va}{\textbf{a}}
\newcommand{\p}{\partial}
\newcommand{\ddt}{\frac{d}{dt}}
\newcommand{\cf}{{\rm cf.\ }} 
\newcommand{\ie}{{\rm i.e.\ }} 
\newcommand{\etc}{{\rm etc.\ }}
\renewcommand{\bar}{\overline}
\DeclareMathOperator{\inj}{inj}
\DeclareMathOperator{\Rm}{Rm}
\DeclareMathOperator{\bRm}{\overline{Rm}}
\DeclareMathOperator{\SU}{SU}
\DeclareMathOperator{\Sp}{Sp}
\DeclareMathOperator{\Vol}{Vol}
\renewcommand{\leq}{\leqslant}
\renewcommand{\geq}{\geqslant}
\renewcommand{\hat}{\widehat}
\renewcommand{\tilde}{\widetilde}
\numberwithin{equation}{section}       
\newtheorem{prop} {Proposition} [section]
\newtheorem{thm}[prop] {Theorem} 
\newtheorem{dfn}[prop] {Definition}
\newtheorem{lem}[prop] {Lemma}
\newtheorem{cor}[prop]{Corollary}
\newtheorem{clm}[prop]{Claim}
\theoremstyle{remark}
\newtheorem*{ackn}{\bf{Acknowledgment}} 
\newtheorem{rk}[prop]{Remark}
\title[Convergence of mean curvature flow in hyperk\"ahler manifolds]{Convergence of mean curvature flow \\ in hyperk\"ahler manifolds} 
\date{\today}
 \author[K. Kunikawa]{Keita Kunikawa}
 \address{Advanced Institute for Materials Research\\
 Tohoku University\\
 Sendai 980-8577\\
 JAPAN}
 \email{keita.kunikawa.e2@tohoku.ac.jp}
 \author[R. Takahashi]{Ryosuke Takahashi}
 \address{Research Institute for Mathematical Siences\\
 Kyoto University\\
 Kyoto 606-8502\\
 JAPAN}
 \email{tryosuke@kurims.kyoto-u.ac.jp}
\subjclass[2010]{Primary: 53C44, Secondary: 53C26}
\keywords{mean curvature flow, hyperk\"ahler manifolds, hyper-Lagrangian submanifolds}
\begin{document}

\maketitle
Inspired by the work of Leung-Wan \cite{LW07}, we study the mean curvature flow in hyperk\"ahler manifolds starting from hyper-Lagrangian submanifolds, a class of middle dimensional submanifolds, which contains the class of complex Lagrangian submanifolds. For each hyper-Lagrangian submanifold, we define a new energy concept called the {\it twistor energy} by means of the associated twistor family (\ie 2-sphere of complex structures). We will show that the mean curvature flow starting at any hyper-Lagrangian submanifold with sufficiently small twistor energy will exist for all time and converge to a complex Lagrangian submanifold for one of the hyperk\"ahler complex structure. In particular, our result implies some kind of energy gap theorem for hyperk\"ahler manifolds which have no complex Lagrangian submanifolds.
\section{Introduction}
Let $(M,\bar{g})$ be a hyperk\"ahler $4n$-manifold, \ie the holonomy group is contained in $\Sp(n)$. Or equivalently, there exist distinct, $\bar{g}$-compatible complex structures $\{J_d\}_{d=1,2,3}$ which satisfy the {\it quaternion relations}:
\[
J_1^2=J_2^2=J_3^2=J_1 J_2 J_3=-{\rm Id}.
\]
Then each hyperk\"ahler manifold $M$ admits a 2-sphere of complex structures called the {\it twistor family}
\[
\sum_d c_d J_d \quad \text{for $(c_1,c_2,c_3) \in \S^2\subset \R^3$}.
\]
Throughout this paper, we assume that $(M, \bar{g})$ has bounded geometry (\ie the injectivity radius, curvatures and derivatives of the curvatures are uniformly bounded). Typical examples of hyperk\"ahler manifolds are a K3 surface and a compact torus $\T^4$ (In fact, any Calabi-Yau 4-manifold is hyperk\"ahler since $\SU(2) \simeq \Sp(1)$ and these are only compact 4-dimensional examples). Beauville \cite{Bea83} constructed two distinct deformation classes of hyperk\"ahler's in $4n$-dimension for every $n>1$. Moreover, Grady (\cf \cite{Gra99}, \cite{Gra03}) constructed two additional deformation classes in dimensions 12 and 20. Each deformation class has representatives which are moduli spaces of semistable sheaves on projective K3 surfaces or abelian surfaces or modifications of such moduli spaces.

In this paper, we show the existence and convergence result for the mean curvature flow (MCF) in hyperk\"ahler manifolds when the initial data is very small. It is no doubt that for studying the MCF, Lagrangian is one of the good class of submanifolds in a K\"ahler-Einstein manifold. Indeed, from Smoczyk's result \cite{Smo96}, the Lagrangian property is preserved under the MCF, and it gives a lot of benefits for computations of evolution equations, by identifying the extrinsic normal bundle with the intrinsic tangent bundle via the complex structure. Nevertheless, we would like to consider another class of submanifolds, called ``hyper-Lagrangian submanifolds'' as displayed below. This class includes Lagrangian submanifolds in hyperk\"ahler 4-manifolds. 

\subsection{Main result}
A natural counterpart of the Lagrangian condition in hyperk\"ahler manifolds is the ``complex Lagrangian'': for $J \in \S^2$, let $\Omega_J$ be a holomorphic symplectic form (\ie non-degenerate $J$-holomorphic 2-form)  with respect to $J$. For a $2n$-dimensional real submanifold $L \subset M$, we say that $L$ is {\it complex Lagrangian} if $\Omega_J|_L=0$ for some $J \in \S^2$. From a basic fact of hyperk\"ahler geometry, we find that there exists a $J$-orthogonal element $K \in \S^2$ such that $\Omega_J$ can be expressed as
\[
\Omega_J=\bar{\omega}_{JK}-\sqrt{-1} \bar{\omega}_K,
\]
where $\bar{\omega}_{JK}=\bar{g}(JK \cdot, \cdot)$, $\bar{\omega}_K=\bar{g}(K \cdot, \cdot)$ are real symplectic forms for $JK$ and $K$ respectively. So the condition $\Omega_J|_L=0$ means that two symplectic forms $\bar{\omega}_{JK}$ and $\bar{\omega}_K$ vanish at the same time for any $J$-orthogonal $K \in \S^2$.

However, this ``bi-Lagrangian'' condition is so strong that any complex Lagrangian submanifold $L$ in $M$ automatically becomes a (minimal) complex submanifold (\cf \cite{Hit99}). So, following the idea of Leung-Wan \cite{LW07}, we relax the assumption by using rich geometry on $M$. We say that $L$ is {\it hyper-Lagrangian} if $\Omega_{\Psi(x)}|_L=0$ at every point $x \in L$ for some varying complex structure $\Psi \colon L \to \S^2$. Then this map $\Psi$ is called the {\it complex phase}. In particular, complex Lagrangian is a special case when we can take $\Psi$ as a constant map. In \cite{LW07}, they showed that if the initial submanifold $L_0$ is hyper-Lagrangian, then $L_t:=F_t(L)$ is still hyper-Lagrangian under the MCF $F_t \colon L \to M$, and then the complex phase $\Psi_t$ evolves according to the coupled flow:
\begin{equation} \label{HLMCF}
\begin{cases}
\ddt F_t=H_t \\
\ddt \Psi_t=\Delta_t \Psi_t,
\end{cases}
\end{equation}
where $\Delta_t \Psi_t$ denotes the tension field of $\Psi_t$ with respect to the evolving metric $g_t:= F_t^{\ast}\bar{g}$. We would like to call \eqref{HLMCF} the {\it hyper-Lagrangian mean curvature flow} (HLMCF). Like other success stories of coupled flows (\cf \cite{Mul10}, \cite{Smo01}), the two geometric flows \eqref{HLMCF} can interact with each other to reveal better properties than it had by itself. For any hyper-Lagrangian submanifold $F \colon L \to M$, we introduce the {\it twistor energy} of $L$ as the Dirichlet energy of the complex phase $\Psi$ w.r.t. the induced metric $g:=F^{\ast}\bar{g}$:
\[
\cT(L):=\int_{L} |\nabla \Psi|^2 d \mu,
\]
where $d\mu$ denotes the Riemannian volume of $g$. Intuitively, the twister energy measures the deviation from $L$ being complex Lagrangian. We can show that any hyper-Lagrangian submanifold which is ``almost'' complex Lagrangian can be deformed to a genuine one in the following sense:
\begin{thm}[Convergence of the HLMCF] \label{coHLMCF}
Let $(M, \bar{g})$ be a hyperk\"ahler $4n$-manifold with bounded geometry. Suppose $L$ is a hyper-Lagrangian submanifold with the complex phase $\Psi_0$ which is smoothly immersed into $M$. Then for any $V_0$, $\Lambda_0$ and $\d_0>0$, there exists $\e_0=\e_0(n,V_0,\Lambda_0,\d_0, \bRm, \inj(M))>0$ such that if $L$ satisfies
\[
\Vol(L_0) \leq V_0, \quad |A|(0) \leq \Lambda_0, \quad \lambda_1(\Delta_L)(0) \geq \d_0, \quad \cT(L_0) \leq \e_0,
\]
then the hyper-Lagrangian mean curvature flow \eqref{HLMCF} starting from $L$ converges smoothly, exponentially fast to a complex Lagrangian submanifold in $M$ for one of the hyperk\"ahler complex structure on $M$.
\end{thm}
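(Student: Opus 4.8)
The plan is to treat the system \eqref{HLMCF} as a small perturbation of the stationary flow at a complex Lagrangian and to run a continuity (bootstrap) argument. The geometric input I would isolate first, from the structure of the twistor family, is a pointwise estimate of schematic form $|H_t| \leq C\,|\nabla \Psi_t|$: since $L_t$ is complex Lagrangian exactly when $\Psi_t$ is locally constant, the mean curvature must be controlled by the derivative of the complex phase. This makes $\cT(L_t)$ simultaneously a measure of the deviation of $L_t$ from being complex Lagrangian and a dominating quantity for the $L^2$-norm of $H_t$. Short-time existence of the MCF (and hence of the induced harmonic-map-heat-flow for $\Psi_t$) being standard, I would fix the maximal time $T \in (0,\infty]$ on which the relaxed bounds
\[
|A|(t) \leq 2\Lambda_0, \qquad \la_1(\Delta_{g_t}) \geq \tfrac{\d_0}{2}, \qquad \cT(L_t) \leq 2\e_0
\]
all persist, noting that the volume is automatically monotone nonincreasing (since $\ddt\,d\mu_t = -|H_t|^2\,d\mu_t$) and hence stays $\leq V_0$. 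The goal is to show that, for $\e_0$ small, each relaxed bound can be \emph{strictly} improved on $[0,T)$, forcing $T=\infty$ and yielding the convergence.

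The analytic core is a decay estimate for the twistor energy. Differentiating $\cT(L_t)=\int_{L_t}|\nabla\Psi_t|^2\,d\mu_t$, using $\ddt\Psi_t=\Delta_t\Psi_t$ and the MCF metric variation $\ddt g_t = -2\langle H_t, A_t\rangle$, gives
\[
\ddt \cT(L_t) = -2\int_{L_t}|\Delta_t \Psi_t|^2 \, d\mu_t + \cE_t,
\]
where the first term is the standard Dirichlet-energy dissipation of the harmonic map heat flow and the error $\cE_t$, coming from the evolving metric, is schematically $\int_{L_t} |H_t|\,|A_t|\,|\nabla\Psi_t|^2$. Using $|H_t|\leq C|\nabla\Psi_t|$, the bound $|A|\leq 2\Lambda_0$, and a Sobolev/interpolation estimate together with the smallness of $\cT$, I would absorb $\cE_t$ into a fraction of the good term. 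For the good term I would compare the tension field $\Delta_t\Psi_t$ with the Euclidean Laplacian of $\Psi_t$ viewed as an $\R^3$-valued map (the difference being the $\S^2$-second-fundamental-form term $|\nabla\Psi_t|^2\Psi_t$, which is higher order in $\nabla\Psi_t$ and absorbable) and invoke the spectral gap $\int|\Delta^{\mathrm{eucl}}\Psi_t|^2 \geq \la_1(\Delta_{g_t})\int|\nabla\Psi_t|^2$, valid because both sides annihilate constants. With $\la_1 \geq \d_0/2$ this yields, after absorption,
\[
\ddt \cT(L_t) \leq -\tfrac{\d_0}{2}\, \cT(L_t), \qquad \text{hence} \qquad \cT(L_t) \leq \e_0\, e^{-\d_0 t/2},
\]
so that $\cT(L_t) < 2\e_0$ throughout, strictly improving the third bound.

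To close the other two bounds I would proceed as follows. Exponential decay of $\cT$ together with $|H_t|\leq C|\nabla\Psi_t|$ makes the total $L^2$-displacement $\int_0^T \big(\int_{L_t}|H_t|^2\,d\mu_t\big)^{1/2}\,dt$ finite and small; thus the immersions $F_t$ form a Cauchy family and the metrics $g_t$ stay uniformly close to $g_0$, so by continuous dependence of the first eigenvalue on a uniformly bounded-geometry metric, $\la_1(\Delta_{g_t})$ stays above $\tfrac{3}{4}\d_0 > \tfrac{\d_0}{2}$. For the curvature I would invoke parabolic smoothing (Ecker--Huisken type) interior estimates for the MCF: once the local $L^2$-norm of $|A|$ is small (controlled by $\cT$ via $|H|\lesssim|\nabla\Psi|$ and the Gauss equation) and $M$ has bounded geometry, these bound $|A|$ and all its covariant derivatives pointwise, improving $|A|(t)\leq 2\Lambda_0$ to a strict inequality for $t$ bounded away from $0$ (the short initial interval being covered by continuity and the initial bound $\Lambda_0$). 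These strict improvements make $[0,T)$ both open and closed, so $T=\infty$; the decay of $\cT$ and of all derivatives of $A$ and $\nabla\Psi$ then gives smooth, exponentially fast convergence of $F_t$ to a limit immersion $F_\infty$ with $\nabla\Psi_\infty\equiv 0$, i.e.\ constant complex phase, which is precisely a complex Lagrangian submanifold for the associated hyperk\"ahler complex structure.

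The step I expect to be the main obstacle is the simultaneous propagation of the eigenvalue lower bound $\la_1(\Delta_{g_t})\geq \d_0/2$ and the curvature bound $|A|\leq 2\Lambda_0$, since both are coupled through the evolving metric and both feed back into the energy decay: the spectral gap is what \emph{drives} the decay, yet the decay is what is used to keep the metric, and hence the spectral gap, under control. Turning this circularity into a genuine bootstrap rather than a vicious circle requires every error to be strictly subleading, which is exactly where the smallness of $\e_0$ and the bounded geometry of $M$ (used to prevent concentration of $|\nabla\Psi_t|$ and to feed the smoothing estimates) must enter quantitatively.
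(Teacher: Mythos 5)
Your high-level strategy is genuinely close to the paper's: the key observation $|H|\leq C|\nabla\Psi|$ is exactly the paper's Proposition \ref{hdp}, the spectral-gap-driven exponential decay of $\cT$ is Lemma \ref{efa}, and the continuity/bootstrap scheme with strictly improved bounds is the paper's Step 2 ($\e_0$-regularity). However, two of your closing mechanisms have genuine gaps. First, your improvement of the curvature bound is wrong: you claim the local $L^2$-norm of $|A|$ is ``controlled by $\cT$ via $|H|\lesssim|\nabla\Psi|$ and the Gauss equation,'' but only the \emph{trace} $H$ is controlled by $\nabla\Psi$; the full second fundamental form is not small in any norm, and cannot be --- the limit is a complex Lagrangian, which is minimal but in general far from totally geodesic, and the hypothesis allows $|A|(0)\leq\Lambda_0$ with $\Lambda_0$ arbitrary. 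So there is no small quantity to feed into an Ecker--Huisken-type $\e$-regularity theorem. The paper closes this bound by an entirely different mechanism: it integrates the evolution inequality $\ddt|A|\leq c(n)(|\nabla^2H|+|H||\bRm|+|H||A|^2)$ in time, using exponential $C^0$-decay of $|H|$ and of $|\nabla^2H|$ (the latter by interpolating the smoothing estimates of Lemma \ref{smg} against the decay of $|H|$), so that the \emph{accumulated change} of $|A|$ is small of order $\eta^{\text{power}}/\d$; the bound persists not because $A$ is small but because $\ddt|A|$ is integrable and small in time.

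Second, several of your steps silently require sup-norm control where only $L^2$ control is available, and the missing ingredient is non-collapsing of $L_t$, which must itself be part of the bootstrap. Your absorption of the error $\int|H||A||\nabla\Psi|^2$ by ``Sobolev/interpolation'' needs a uniform Sobolev constant on the evolving $L_t$, and your eigenvalue step is worse: smallness of $\int_0^T\bigl(\int_{L_t}|H|^2\bigr)^{1/2}dt$ does \emph{not} keep the metrics uniformly close, since $\ddt g=-2\langle H,A\rangle$ requires $\int_0^T\max_{L_t}|H|\,dt$ (not an $L^2$-in-space quantity) to be small; bounded geometry of the ambient $M$ does not prevent collapsing of $L_t$. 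The paper resolves this by devoting Step 1 to converting the $L^2$ hypothesis $\cT(L_0)\leq\e_0$ into a $C^0$ bound on $|\nabla\Psi|$: injectivity radius bounds for $L_t$ (via \cite{CH10}), volume comparison, the volume ratio estimate (Lemma \ref{ncl}) to propagate $\kappa$-non-collapsing, and Lemma \ref{efz} (small $L^2$ plus bounded derivative plus non-collapsing implies small sup-norm); the non-collapsing constant $\kappa$ is then carried as one of the bootstrapped quantities in the class $\cA$. Relatedly, you never use the reverse coupling $|\nabla\Psi|\leq c(n)|A|$ (Proposition \ref{bja}), which is what rules out finite-time blow-up of the harmonic map flow into the positively curved target $\S^2$ --- with small energy alone such flows can still blow up \cite{CD90}. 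Finally, your last sentence glosses over why the limiting constant phase is unique (independent of subsequence); the paper needs a separate maximum-principle argument (via Corollary \ref{mpl}) to pin down a single hyperk\"ahler complex structure.
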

In the above theorem, we need not assume that $M$ has a complex Lagrangian submanifold, so it also gives an existence result for such a submanifold as well as the stability along the MCF. Although generic K3 surfaces do not have holomorphic curves at all, it is also interesting to understand this situation from geometric analytic point of view. Applying our theorem, one can immediately see that the twistor energy causes some gap: for any $V_0$, $\Lambda_0$ and $\d_0>0$ we define
\begin{align*}
\cL(V_0,\Lambda_0,\d_0):=
\Bigg\{
L\subset M
\Bigg|
\begin{split}
\text{$L$ is a hyper-Lagrangian submanifold} \\
 \; \Vol(L) \leq V_0, \quad |A| \leq \Lambda_0, \quad \lambda_1(\Delta_L) \geq \d_0
 \end{split}
\Bigg\}. 
\end{align*}
Then we have the following:
\begin{cor}[Energy gap theorem] \label{egt}
Assume that a $4n$-dimensional hyperk\"ahler manifold $M$ with bounded geometry has no complex Lagrangian submanifolds. Then for any $V_0$, $\Lambda_0$ and $\d_0>0$, there exists a constant $c=c(n, V_0, \Lambda_0, \d_0, \bRm, \inj(M))>0$ such that
\[
\inf_{L \in \cL(V_0,\Lambda_0,\d_0)} \cT(L) \geq c.
\]
\end{cor}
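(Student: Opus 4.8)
The plan is to obtain Corollary~\ref{egt} as a direct consequence of Theorem~\ref{coHLMCF} by contraposition. Given $V_0, \Lambda_0, \d_0 > 0$, let $\e_0 = \e_0(n, V_0, \Lambda_0, \d_0, \bRm, \inj(M)) > 0$ be the threshold furnished by Theorem~\ref{coHLMCF} for these same parameters, and simply set $c := \e_0$. Because $c$ is built directly from $\e_0$, it automatically carries the asserted dependence on $n, V_0, \Lambda_0, \d_0, \bRm$ and $\inj(M)$, so no additional bookkeeping on the constant is required.

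I would then argue by contradiction. Suppose the claimed lower bound fails, \ie $\inf_{L \in \cL(V_0,\Lambda_0,\d_0)} \cT(L) < c = \e_0$. (If $\cL(V_0,\Lambda_0,\d_0) = \emptyset$, the infimum is $+\infty$ and there is nothing to prove, so we may assume the class is nonempty.) By the definition of the infimum there exists a hyper-Lagrangian submanifold $L \in \cL(V_0,\Lambda_0,\d_0)$ with $\cT(L) < \e_0$. By the very definition of the class $\cL(V_0,\Lambda_0,\d_0)$, such an $L$ satisfies $\Vol(L) \leq V_0$, $|A| \leq \Lambda_0$ and $\lambda_1(\Delta_L) \geq \d_0$; together with $\cT(L) \leq \e_0$, these are precisely the four hypotheses needed to invoke Theorem~\ref{coHLMCF} with initial data $L_0 = L$.

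Applying Theorem~\ref{coHLMCF}, the hyper-Lagrangian mean curvature flow~\eqref{HLMCF} starting from $L$ exists for all time and converges smoothly, exponentially fast, to a complex Lagrangian submanifold $L_\infty \subset M$ for one of the hyperk\"ahler complex structures on $M$. The point to emphasize is that $L_\infty$ is a genuine, nonempty submanifold: it arises as the smooth limit of the nonempty immersions $F_t(L)$, so the limiting immersion is again a smooth immersion of $L$ whose image is complex Lagrangian. This directly contradicts the standing assumption that $M$ admits no complex Lagrangian submanifold. Hence no such $L$ can exist, and we conclude $\inf_{L \in \cL(V_0,\Lambda_0,\d_0)} \cT(L) \geq c$, as desired.

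Since all of the analytic substance is already packaged inside Theorem~\ref{coHLMCF}, I do not expect any essential difficulty here; the corollary is an immediate gap-type reformulation of the convergence theorem. The only step requiring genuine care is the last one, namely verifying that the limiting object is an honest nonempty complex Lagrangian rather than a degenerate or empty limit. This is guaranteed by the smoothness and the exponential rate of convergence asserted in Theorem~\ref{coHLMCF}, which preclude any collapse of the submanifold, and it is the single place where one should check the hypotheses of the convergence theorem are being used correctly.
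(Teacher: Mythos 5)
Your proof is correct and is exactly the argument the paper intends: the corollary is stated as an immediate consequence of Theorem~\ref{coHLMCF}, obtained by setting $c=\e_0$ and noting that any $L\in\cL(V_0,\Lambda_0,\d_0)$ with $\cT(L)<\e_0$ would flow to a complex Lagrangian submanifold, contradicting the hypothesis that $M$ admits none. Your extra care about the nonemptiness of the limit and the constant's dependence is sound but not a departure from the paper's route.
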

The proof of Theorem \ref{coHLMCF} is based on \cite{Li12} for the Lagrangian mean curvature flow (LMCF). In \cite{Li12}, the crucial step is to establish the exponential estimate for the $L^2$-norm of mean curvature vector $H$ by using the fact that each submanifold $L_t$ is Lagrangian, which is not valid for our case. Instead, we take an alternative approach from the view point of the theory of harmonic map flow. A key observation is that the $L^2$-norm of $H$ is bounded by the twistor energy (\cf Proposition \ref{hdp}):
\[
\int_{L_t} |H_t|^2 d \mu_t \leq 2 \cT(L_t).
\]
So the problem comes down to establishing the exponential estimate for the twistor energy, which is indeed, possible along the same line as the usual harmonic map flow (\cf Lemma \ref{efa}). We remark that for the harmonic map flow into positively curved targets, the flow possibly forms singularities in a finite time even if it has small initial Dirichlet energy (\cf \cite{CD90}). We can overcome this by showing Proposition \ref{bja}.

\subsection{Examples and relation to other results}
This paper is entirely written for hyper-Lagrangian submanifolds of arbitrary dimension. But after we posted the preprint, we noticed Qiu-Sun’s result \cite{QS19} which states that every hyper-Lagrangian except surface must be a complex Lagrangian, so the concept of the hyper-Lagrangian is meaningful only when $n=1$. However, we emphasize that our results are new even when $n=1$. Contrary to the higher dimensional case, the concept of hyper-Lagrangian surface is universal and enables us to make a systematic study of several conditions for submanifolds preserved under the MCF. We can see that every surface $L$ in a hyperk\"ahler 4-manifold $M$ admits a canonical complex phase map $\Psi \colon L \to \S^2$ defined by
\[
J_{\Psi} e_1=e_2, \quad J_{\Psi} e_3=-e_4,
\]
where $\{e_1,e_2,e_3,e_4 \}$ is any oriented orthonormal frame on $TM$ such that $\{e_1,e_2 \}$ is an oriented frame on $TL$ and $\{e_3,e_4\}$ is an orthonormal frame for the normal bundle. Indeed, the map $\Psi$ is independent of the choice for such a frame. In the following, we will explain the each class of submanifolds separately while considering what shape the each complex phase is (see also \cite{LW07}).
\subsubsection{Symplectic mean curvature flow}
First, we consider symplectic surfaces. It was asked by Yau (for instance, see \cite{Wan01}) that {\it how can a symplectic submanifold be deformed to a holomorphic one?} Since a symplectic surface remains to be symplectic along the MCF in a K\"ahler-Einstein surface (\cf \cite{CL01}, \cite{Wan01}), one expects that the symplectic mean curvature flow (SMCF) is applicable to Yau's question. It seems that the convergence of the SMCF with small initial data has not been accomplished yet in the general case, whereas we know several partial results. For instance, our theorem generalizes Han-Sun's result \cite[Corollary 4.6]{HS12}: we express $\Psi$ as a map $\va \colon L \to \R^3$, \ie $\va$ is a coefficient of $\Psi$ with respect to $\{J_d \}$
\[
J_{\Psi}=\sum_d a_d J_d, \quad \va:=(a_1,a_2,a_3).
\]
By using the quaternion relations, we see that
\begin{equation} \label{kag}
\cos \a:=\bar{\omega}_{J_3}(e_1,e_2)=\bar{g}(J_3 e_1,e_2)=a_3.
\end{equation}
Hence the condition that $L$ is symplectic w.r.t. $\bar{\omega}_{J_3}$ is equivalent to say that the image $\Psi(L)$ is contained in the hemisphere $\S^2_+:=\{(c_1,c_2,c_3) \in \S^2 \subset \R^3|c_3>0\}$. Then the (local) angle $\a$ defined by \eqref{kag} is called the {\it k\"ahler angle}. Applying the maximum principle to the evolution equation of $\va$, we find that the hemisphere condition is preserved under the HLMCF (\cf Corollary \ref{mpl}), which is essentially a restatement of the fact as explained above that {\it if the initial surface is symplectic, then the surface is still symplectic along the mean curvature flow}. In \cite{HS12}, they showed the convergence of the SMCF under the stronger assumption that the ambient K\"ahler surface $M$ has zero sectional curvature and the initial $L^2$-norm of $A$ is very small. Also there is a convergence result for the SMCF in K\"ahler-Einstein surfaces with positive Ricci curvature by Han-Li \cite{HL05}, where the positivity of the extrinsic curvature was essentially used. Anyways, Theorem \ref{coHLMCF} indicates that the MCF method is still valid for Yau's question, and makes the first step in this direction.
\subsubsection{Lagrangian mean curvature flow}
Next, we explain the Lagrangian case. If $L$ is Lagrangian with respect to $\bar{\omega}_{J_o}$ for a fixed $J_o \in \S^2$, then without loss of generality, we may assume $J_3=J_o$. By the Lagrangian condition, we find that $L$ has the $J_3$-orthogonal complex phase $J_{\Psi}$ which can be expressed as
\begin{equation} \label{Lag}
J_{\Psi}(x)=\cos \theta(x) J_1+\sin \theta(x)J_2
\end{equation}
for some multi-valued function $\theta \colon L \to \R$. Moreover, the function $\theta$ and $\bar{\omega}_{J_o}$ are related by the formula
\begin{equation} \label{exact}
i_H \bar{\omega}_{J_o}=d \theta.
\end{equation}
So $\theta$ is nothing but the {\it Lagrangian angle}. In particular, we often consider the following special cases:
\begin{enumerate}
\item The form $i_H \bar{\omega}_{J_o}$ is exact, or equivalently, $\theta$ is a single-valued function.
\item The submanifold $L$ is {\it almost calibrated}, \ie $L$ satisfies (1) and $\cos \theta >0$.
\end{enumerate}
As it is for Lagrangian, these two conditions are preserved under the MCF (\cf \cite{Smo99}, \cite{CL01}, \cite{Wan01}). The convergence result for the LMCF with small initial data was obtained by Li \cite[Theorem 1.2]{Li12}. He showed the similar convergence result to Theorem \ref{coHLMCF} under the assumption (1) (but, we need not assume (2)) and that the initial $L^2$-norm of $H$ is very small. So Theorem \ref{coHLMCF} is still meaningful even if $L_0$ is Lagrangian since we need not assume (1) in our theorem. 

\begin{figure}[h]\label{concept}
\includegraphics[width=10.0cm]{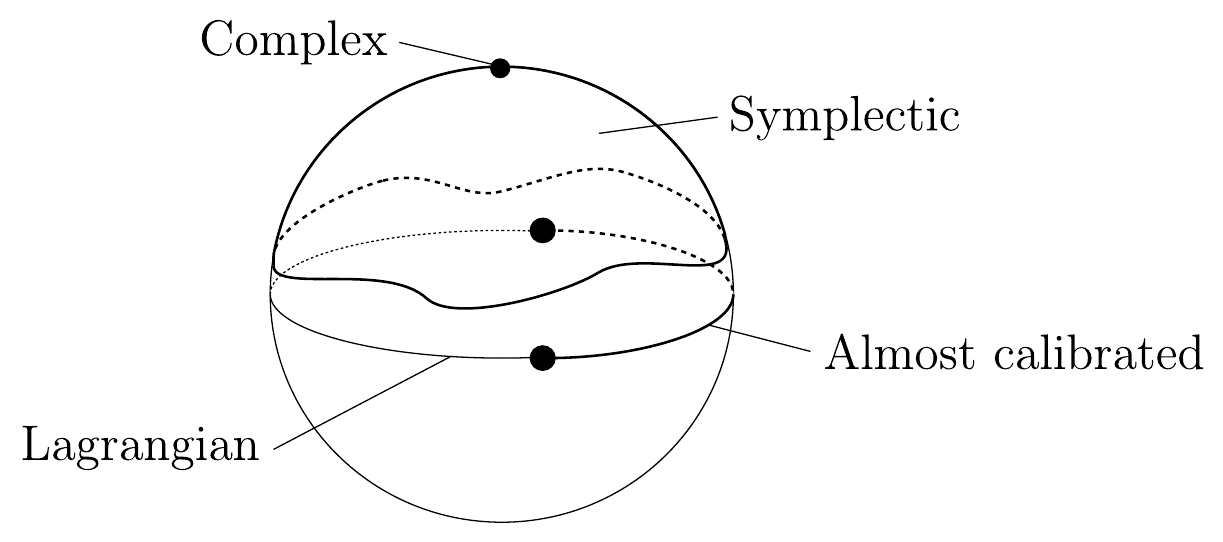}
\caption{Image of the complex phase $\Psi$ in $\S^2$}
\end{figure}

Finally, we again emphasize the benefit of the hyper-Lagrangian submanifolds. In fact, the hyper-Lagrangian structure gives one a comprehensive view point to understand the concepts of symplectic surfaces or (almost calibrated) Lagrangian submanifolds in hyperk\"ahler 4-manifolds. Figure 1 shows the correspondence between each of these concepts and the image of the complex phase map $\Psi:L\to \S^2$. 
\subsubsection{Holomorphic curves in K3}
On any polarized K3 surface $(M,H)$ (with $H \not \simeq \cO_M$), it is known that there exists at least one holomorphic curve which belongs to the linear system $|mH|$ for all $m \geq 1$ (Bogomolov, Mumford, Mori-Mukai \cite{MM83}). Due to the Lefschetz theorem, the existence of such an $H$ is equivalent to say that the {\it N\'eron-Severi lattice}
\[
{\rm NS}(M):=H^{1,1}(M) \cap H^2(M,\Z)
\]
is non-empty. Moreover, Chen \cite{Che99} proved the existence of infinitely many holomorphic curves on general K3 surfaces. Then we can take any small perturbation of the holomorphic curves as an initial data in Theorem \ref{coHLMCF}.
\subsection{Organization of the paper}
Our article will be organized as follows. We will first recall some results discovered by Leung-Wan \cite{LW07} and prove formulas relating the mean curvature vector (or second fundamental form) with the complex phase which are needed in the rest of the article. In Section 3, we study the behavior of the twistor energy and first eigenvalue along the HLMCF, and then establish some parabolic estimates. Finally, we give the proof of Theorem \ref{coHLMCF} in the last part of Section 3.
\begin{ackn}
The authors express their gratitude to Shigetoshi Bando for helpful conversations. We also would thank Ryoichi Kobayashi for pointing out Corollary \ref{egt}. K.K. is supported by JSPS KAKENHI Grant Number JP19K14521, and R.T. is supported by Grant-in-Aid for JSPS Fellows Number 16J01211.
\end{ackn}
\section{Hyper-Lagrangian submanifolds}
In this section, we recall some results about hyper-Lagrangian submanifolds studied in \cite{LW07}. Let $M$ be a hyperk\"ahler $4n$-manifold and $L \subset M$ a real submanifold of dimension $2n$. In this section, we promise that the indices ($i$, $j$, $\a$, $\b$, \etc) run in the following manner
\[
\begin{split}
i,j=1,\ldots,2n, \quad \a,\b=2n+1,\ldots,4n, \quad A,B=1,\ldots,4n, \\
\nu, \lambda=1,\ldots n, \quad \mu, \r=n+1,\ldots,2n.
\end{split}
\]
\begin{dfn}
A submanifold $L$ is called hyper-Lagrangian if $\Omega_{\Psi(x)}|_L=0$ at every point $x \in L$ for some $\Psi \colon L \to \S^2$. Then $\Psi$ is called the complex phase. In particular, a hyper-Lagrangian submanifold is called complex Lagrangian if we can take $\Psi$ as a constant map. 
\end{dfn}
Let $\Phi \colon L \to \S^2$ be a smooth map such that $\Phi(x)$ is orthogonal to $\Psi(x)$ for each $x \in L$. We can take a special orthonormal frame $\{e_i\}$ for $TL$ satisfying
\[
J_{\Psi} e_{2 \nu-1}=e_{2 \nu}.
\]
Then $\{e_{i+2n}:=J_{\Phi} e_i\}$ is an orthonormal frame for the normal bundle satisfying
\[
J_{\Psi} e_{2 \mu-1}=-e_{2 \mu}.
\]
Then $\{e_A\}$ defines a frame of $TM$. For a hyper-Lagrangian submanifold $L$ with the complex phase $\Psi$, we denote the associated almost-complex structure by $J_{\Psi}$. Then the complex phase $J_{\Psi}$ acts on $TL$, and determines an almost-complex structure on $L$. However, hyper-Lagrangian is a strong condition which imposes a lot of restrictions on the structural equations. For instance, let $\{\varphi_{AB}\}$ be the connection forms with respect to $\{e_A\}$, \ie $\bar{\na} e_A=\varphi_{AB} e_B$. Then the structure theorem of hyper-Lagrangian submanifolds (\cf \cite[Theorem 4.1]{LW07}) implies
\begin{equation} \label{seq}
\begin{split}
\varphi_{2\nu-1,2\lambda-1}=\varphi_{2\nu,2\lambda}, \quad \varphi_{2\nu,2\lambda-1}=-\varphi_{2\nu-1,2\lambda}, \\
\varphi_{2\mu-1,2\r-1}=-\varphi_{2\mu,2\r}, \quad \varphi_{2\mu,2\r-1}=\varphi_{2\mu-1,2\r}.
\end{split}
\end{equation}
As a consequence, we obtain the following:
\begin{thm}[\cite{LW07}, Corollary 4.2]
The complex phase $\Psi$ induces an integrable K\"ahler structure $(J_\Psi,\bar{g}|_L)$ on $L$ with holomorphic normal bundle.
\end{thm}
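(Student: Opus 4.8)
The plan is to derive both assertions directly from the structure equations \eqref{seq}, working throughout in the adapted orthonormal frame $\{e_A\}$, where $\bar{\nabla} e_A = \sum_B \varphi_{AB} e_B$. The induced Levi-Civita connection on $L$ is the tangential part $\nabla_{e_k} e_i = \sum_j \varphi_{ij}(e_k) e_j$, and the normal connection is $\nabla^\perp_{e_k} e_\alpha = \sum_\beta \varphi_{\alpha\beta}(e_k) e_\beta$. Since $J_\Psi$ is a $\bar{g}$-orthogonal complex structure preserving $TL$, its restriction is compatible with $g := \bar{g}|_L$; hence to obtain an integrable K\"ahler structure it suffices to prove that $J_\Psi$ is parallel on $TL$, i.e. $\nabla J_\Psi = 0$. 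Indeed, a parallel orthogonal almost-complex structure has vanishing Nijenhuis tensor (so is integrable by Newlander--Nirenberg) and a parallel, hence closed, fundamental form $\omega = g(J_\Psi \cdot, \cdot)$.

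First I would compute $(\nabla_{e_k} J_\Psi)e_{2\nu-1} = \nabla_{e_k}(e_{2\nu}) - J_\Psi \nabla_{e_k}(e_{2\nu-1})$ in the frame. Expanding the tangential covariant derivatives and applying $J_\Psi e_{2\lambda-1} = e_{2\lambda}$, $J_\Psi e_{2\lambda} = -e_{2\lambda-1}$, the coefficient of $e_{2\lambda-1}$ is $\varphi_{2\nu,2\lambda-1} + \varphi_{2\nu-1,2\lambda}$ and that of $e_{2\lambda}$ is $\varphi_{2\nu,2\lambda} - \varphi_{2\nu-1,2\lambda-1}$. Both vanish identically by the first line of \eqref{seq}; the analogous computation on $e_{2\nu}$ yields the same cancellation. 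Hence $\nabla J_\Psi = 0$, and $(J_\Psi, g)$ is an integrable K\"ahler structure on $L$.

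For the normal bundle I would run the parallel computation with $J_\Psi e_{2\mu-1} = -e_{2\mu}$ and the second line of \eqref{seq}. The crucial feature is the sign reversal between the two lines: the normal structure equations are the complex conjugates of the tangential ones, so $\nabla^\perp$ is \emph{not} complex-linear for $J_\Psi|_{NL}$; instead, the twistor anticommutation $J_\Psi J_\Phi = -J_\Phi J_\Psi$ realizes $J_\Phi \colon (TL, J_\Psi) \to (NL, J_\Psi|_{NL})$ as a conjugate-linear isomorphism, so $(NL, J_\Psi|_{NL}) \cong \overline{T^{1,0}L}$. Composing with the conjugate-linear musical isomorphism of the K\"ahler metric identifies this with the holomorphic cotangent bundle $\Omega^{1,0}L = T^*L$, and, $L$ being K\"ahler, the identification is holomorphic, exhibiting the normal bundle as a holomorphic vector bundle (and recovering the familiar $NL \cong T^*L$ of the complex-Lagrangian case). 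I expect the main obstacle to be precisely this last step: unlike the tangential case, where parallelism of $J_\Psi$ settles everything at once, holomorphicity of the normal bundle is \emph{not} a bare consequence of $\nabla^\perp J_\Psi|_{NL} = 0$, which in fact fails. One must either transport the holomorphic structure through the metric identification and check that the induced $\bar{\partial}$-operator is integrable, or equivalently verify directly that the $(0,2)$-part of the normal curvature vanishes. In complex dimension one this integrability is automatic, consistent with the fact (Qiu--Sun) that the notion is substantive only for surfaces.
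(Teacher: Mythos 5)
The K\"ahler half of your argument is fine: the first line of \eqref{seq} is exactly the statement that $J_\Psi|_{TL}$ is parallel for the induced Levi-Civita connection, and a parallel $\bar{g}$-orthogonal almost complex structure is integrable with closed K\"ahler form. (Note that the paper itself offers no proof of this theorem; it is quoted from \cite{LW07}, Corollary 4.2, as a consequence of \eqref{seq}, so your tangential computation matches the intended derivation.) The normal-bundle half, however, is built on a claim that is false: you assert that $\nabla^\perp(J_\Psi|_{NL})\neq 0$, taking the printed signs in the second line of \eqref{seq} at face value. As printed, that line carries a sign error, and it is inconsistent with the paper's own Proposition \ref{bja}: there the authors compute $J_{i,2\mu-1,\alpha}=0$ for all normal indices $\alpha$, which is precisely the vanishing of the normal--normal block of $\bar{\nabla}J_\Psi$, \ie $\nabla^\perp(J_\Psi|_{NL})=0$. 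Indeed, redoing your tangential computation on the normal side gives $J_{i,2\mu-1,2\rho-1}=-\varphi_{2\mu,2\rho-1}(e_i)-\varphi_{2\mu-1,2\rho}(e_i)$, which under the printed relations equals $-2\varphi_{2\mu-1,2\rho}(e_i)$, generically nonzero and contradicting Proposition \ref{bja}; under the relations $\varphi_{2\mu-1,2\rho-1}=\varphi_{2\mu,2\rho}$, $\varphi_{2\mu,2\rho-1}=-\varphi_{2\mu-1,2\rho}$ it vanishes. The sign question is settled, frame-free, as follows: since the $J_d$ are parallel, $\bar{\nabla}_XJ_\Psi=\sum_d(Xa_d)J_d$, and $\sum_d a_d\,(Xa_d)=0$, so $\bar{\nabla}_XJ_\Psi$ is a multiple of some $K\in\S^2$ orthogonal to $J_{\Psi(x)}$; the hyper-Lagrangian condition makes $L$ Lagrangian for $\bar{\omega}_K$ for every such $K$, so $K$ interchanges $T_xL$ and $N_xL$. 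Hence $\bar{\nabla}_XJ_\Psi$ has no tangential--tangential \emph{and} no normal--normal components: the complex phase is parallel for $\nabla^\perp$ exactly as it is for $\nabla$, and the ``crucial feature'' your argument hinges on (the sign reversal) is an artifact of a typo.

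Consequently your conjugate-linear detour through $J_\Phi$ is unnecessary, and---as you yourself concede---it is also incomplete, which is the second gap: holomorphicity of the normal bundle is never established. Transporting a holomorphic structure across $J_\Phi$ proves nothing by itself, since $\Phi$ is an auxiliary, in general only locally defined, choice, and a structure obtained by transport has no a priori relation to the normal connection; the content of the corollary is that the geometrically given operator is integrable. The correct route parallels your tangential argument: because $\nabla^\perp(J_\Psi|_{NL})=0$, the $(0,1)$-part of $\nabla^\perp$ is a $\bar{\partial}$-operator on $(NL,J_\Psi|_{NL})$; for $n=1$ (the only case where hyper-Lagrangian is strictly more general than complex Lagrangian, by \cite{QS19}) its integrability is automatic since $L$ is then a complex curve, and in general it follows from the vanishing of the $(0,2)$-part of the normal curvature, which Leung--Wan extract from their full structure equations, of which \eqref{seq} records only the block-diagonal part. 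Since you explicitly leave this last step open, the second assertion of the theorem remains unproven in your proposal.
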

We set
\[
e_{\nu}'=\frac{1}{2}(e_{2 \nu-1}-\sqrt{-1} e_{2 \nu}), \quad e_{\nu}''=\frac{1}{2}(e_{2 \nu-1}+\sqrt{-1} e_{2 \nu}),
\]
\[
e_{\mu}'=\frac{1}{2}(e_{2 \mu-1}+\sqrt{-1} e_{2 \mu}), \quad e_{\mu}''=\frac{1}{2}(e_{2 \mu-1}-\sqrt{-1} e_{2 \mu}).
\]
Then $\{e_{\nu}',e_{\mu}' \}$ defines a complex basis referred as the {\it canonical frame adapted to $(\Psi,\Phi)$}.
Correspondingly, we take the basis $\{\zeta_A\}$ dual to $\{e_A\}$ and set
\[
\zeta_{\nu}'=\zeta_{2 \nu-1}+\sqrt{-1} \zeta_{2 \nu}, \quad \zeta_{\nu}''=\zeta_{2 \nu-1}-\sqrt{-1} \zeta_{2 \nu},
\]
\[
\zeta_{\mu}'=\zeta_{2 \mu-1}-\sqrt{-1} \zeta_{2 \mu}, \quad \zeta_{\mu}''=\zeta_{2 \mu-1}+\sqrt{-1} \zeta_{2 \mu}.
\]
With this basis, $\Omega_\Psi$ can be written by
\[
\Omega_\Psi=-\sqrt{-1} \sum_{\nu,\mu} \zeta_{\nu}' \wedge \zeta_{\mu}'.
\]
Leung-Wan (\cf \cite[Theorem 4.5]{LW07}) found the formula relating the mean curvature vector $H$ and the complex phase $\Psi$ as follows:
\begin{prop}
We have
\begin{equation} \label{plf}
i_H \Omega_\Psi+2 \sqrt{-1} \p \Psi=0.
\end{equation}
\end{prop}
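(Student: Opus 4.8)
The plan is to verify \eqref{plf} by expanding both terms in the canonical frame adapted to $(\Psi,\Phi)$ and comparing coefficients of the basis $(1,0)$-forms $\zeta_\nu'$. Both $i_H\Omega_\Psi$ and $2\sqrt{-1}\,\p\Psi$ are complex-valued $1$-forms on $L$, so it suffices to check the identity after pairing with the tangent frame $\{e_\lambda',e_\lambda''\}$. To make sense of $\p\Psi$ as such a form, I would decompose $d\Psi$ along the orthonormal basis $\{J_\Phi,\,J_\Psi J_\Phi\}$ of $T_\Psi\S^2$ and take the complex combination matching the decomposition $\Omega_\Psi=\bar{\omega}_{\Psi\Phi}-\sqrt{-1}\,\bar{\omega}_\Phi$, then extract the $(1,0)$-part with respect to $J_\Psi$.

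First I would compute the left-hand side. Writing the (normal) mean curvature vector in the complex normal frame as $H=\sum_\mu\bigl(H^{\mu}e_\mu'+\overline{H^{\mu}}\,e_\mu''\bigr)$ and using $\Omega_\Psi=-\sqrt{-1}\sum_{\nu,\mu}\zeta_\nu'\wedge\zeta_\mu'$, the contraction is immediate: since $\zeta_\nu'$ annihilates the normal directions while $\zeta_\mu'$ annihilates the tangent ones, only $i_H\zeta_\mu'=H^{\mu}$ survives. A one-line computation then exhibits $i_H\Omega_\Psi$ as an explicit $(1,0)$-form whose coefficients are the normal components $H^\alpha$ of the mean curvature, which in the frame are the diagonal traces $H^\alpha=\sum_i\varphi_{i\alpha}(e_i)$ of the second fundamental form.

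Next I would compute $\p\Psi$. Since $M$ is hyperk\"ahler, $\bar{\na}J_d=0$, so writing $J_\Psi=\sum_d a_dJ_d$ gives $\bar{\na}_XJ_\Psi=\sum_d(Xa_d)J_d$, an endomorphism anticommuting with $J_\Psi$ and hence tangent to $\S^2$ at $\Psi$. Its $J_\Phi$- and $J_\Psi J_\Phi$-components are obtained by the trace pairing $\langle A,B\rangle=c\,\Tr(A^tB)$ that makes $\{J_d\}$ orthonormal; this is the mechanism that forces a trace to appear. Differentiating the defining relations $J_\Psi e_{2\nu-1}=e_{2\nu}$ and $J_\Psi e_{2\mu-1}=-e_{2\mu}$ and substituting $\bar{\na}e_A=\varphi_{AB}e_B$ rewrites these components as sums $\sum_A$ of the mixed connection forms $\varphi_{i\alpha}$; taking the $(1,0)$-part then presents $\p\Psi$ as a $(1,0)$-form built from the $\varphi_{i\alpha}$.

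Finally I would match the two sides. The crucial input is the structure theorem \eqref{seq}: together with the symmetry $h^\alpha_{ij}=h^\alpha_{ji}$ of the second fundamental form, the relations in \eqref{seq} force the off-diagonal contributions in the trace expression for $\p\Psi$ to cancel in conjugate pairs, so that the a priori full second-fundamental-form sum collapses to exactly the diagonal mean-curvature traces $H^\alpha$ appearing on the left, while simultaneously producing the numerical factor $2\sqrt{-1}$. I expect this final bookkeeping—tracking the constant and the signs and verifying that \emph{only} the trace survives—to be the main obstacle; the contractions in the adapted frame that precede it are routine.
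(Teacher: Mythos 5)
This proposition is not proved in the paper at all: it is quoted from Leung--Wan \cite[Theorem 4.5]{LW07}, so there is no in-paper argument to compare yours against. Judged on its own terms, your outline is the right moving-frames argument, and both of its computational halves already appear in the paper in disguised form: the contraction $i_H \Omega_\Psi=-\sqrt{-1} \sum_\mu (H^{2\mu-1}-\sqrt{-1} H^{2 \mu}) \zeta_{\mu}'$ is carried out verbatim inside the proof of Proposition \ref{hdp}, and the expression of the normal-valued part of $\bar{\na} J_\Psi$ in terms of the $h^{\a}_{ij}$, using \eqref{seq} to kill the tangential block, is exactly the computation in the proof of Proposition \ref{bja}, which you can import wholesale.

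The one step where your bookkeeping needs care is the collapse to the trace, and your attribution of it is slightly off. The trace pairing you invoke yields the components of $d\Psi$ as sums over the \emph{pairing} index $\nu$, of the schematic form $da'_2(e_i)=\tfrac{1}{n}\sum_\nu\big(-h^{2(\nu+n)-1}_{2\nu,i}+h^{2(\nu+n)}_{2\nu-1,i}\big)$; this is a different trace from the tangential trace $H^{\a}=\sum_j h^{\a}_{jj}$ appearing in $i_H\Omega_\Psi$, and identifying the two is precisely what must be proved --- \eqref{seq} plus the symmetry $h^{\a}_{ij}=h^{\a}_{ji}$ applied to the averaged identity is not enough to do it for $n\geq 2$ (for $n=1$, the case of actual interest by Qiu--Sun, the two traces coincide trivially). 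What does the work is the \emph{entry-wise} comparison of the two expressions for $\bar{\na}_i J_\Psi$: since $\bar{\na}_i J_\Psi=da'_2(e_i)\,J_\Phi+da'_3(e_i)\,J_\Psi J_\Phi$ (it is tangent to $\S^2$ at $\Psi(x)$), matching matrix entries against the formulas in Proposition \ref{bja} gives
\[
-h^{2\mu-1}_{2\nu,i}+h^{2\mu}_{2\nu-1,i}=\d_{\mu,\nu+n}\,da'_2(e_i),\qquad
h^{2\mu}_{2\nu,i}+h^{2\mu-1}_{2\nu-1,i}=\d_{\mu,\nu+n}\,da'_3(e_i)
\]
(up to signs fixed by your conventions): all blocks with $\mu\neq\nu+n$ vanish and the diagonal blocks are independent of $\nu$. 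Solving these for the diagonal entries $h^{2\mu-1}_{2\nu-1,2\nu-1}$, $h^{2\mu-1}_{2\nu,2\nu}$ with $\mu$ fixed and summing over $\nu$, the terms carrying no Kronecker delta cancel in pairs exactly by $h^{\a}_{ij}=h^{\a}_{ji}$, leaving $H^{2\mu-1}$ and $H^{2\mu}$ equal to the matched components of $d\Psi$, which assembles into \eqref{plf} with the factor $2\sqrt{-1}$. Your plan already contains both required inputs (the decomposition of $d\Psi$ along $\{J_\Phi, J_\Psi J_\Phi\}$ and \eqref{seq}); just make sure the final step runs through this entry-wise identity rather than only through the trace-pairing average, since the latter alone is insufficient.
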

In particular, the above proposition shows that a hyper-Lagrangian submanifold $L$ is minimal if and only if the complex phase $\Psi$ is anti-holomorphic. Meanwhile, by using the formula \eqref{plf}, one can obtain a bound for $|H|$ by means of the energy density of the complex phase $\Psi$:
\begin{prop} \label{hdp}
We have
\[
|H|^2 \leq 2|\na \Psi|^2.
\]
\end{prop}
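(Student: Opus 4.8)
The plan is to read the inequality off the Leung--Wan formula \eqref{plf} by taking pointwise norms of both sides and identifying the two resulting quantities. Writing \eqref{plf} as $i_H \Omega_\Psi = -2\sqrt{-1}\,\p\Psi$ and taking norms gives $|i_H\Omega_\Psi|^2 = 4|\p\Psi|^2$, so it suffices to establish the pointwise identity and inequality
\begin{equation*}
|i_H\Omega_\Psi|^2 = 2|H|^2, \qquad |\p\Psi|^2 \leq |\na\Psi|^2,
\end{equation*}
after which $2|H|^2 = 4|\p\Psi|^2 \leq 4|\na\Psi|^2$ yields the claim.

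For the first identity I would compute in the canonical frame adapted to $(\Psi,\Phi)$. Since $H$ is a real normal vector field, I expand $H = \sum_\mu (H^{2\mu-1}e_{2\mu-1} + H^{2\mu}e_{2\mu})$ with $\mu$ running over the normal range, and contract $H$ into $\Omega_\Psi = -\sqrt{-1}\sum_{\nu,\mu}\zeta_\nu'\wedge\zeta_\mu'$. The tangential covectors $\zeta_\nu'$ annihilate the normal vector $H$, so only the factors $\zeta_\mu'(H) = H^{2\mu-1}-\sqrt{-1}H^{2\mu}$ survive the contraction, leaving $i_H\Omega_\Psi$ as a combination of the $\zeta_\nu'$. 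Using that the $\zeta_\nu'$ are mutually orthogonal with $|\zeta_\nu'|^2 = 2$, the Hermitian norm collapses to $\sum_\mu\big((H^{2\mu-1})^2 + (H^{2\mu})^2\big)$ times a factor $2$, i.e. $|i_H\Omega_\Psi|^2 = 2|H|^2$. The point to check here is that the contraction pairs each normal direction with exactly one tangential covector, which is the nondegeneracy of $\Omega_\Psi$ as a pairing between $T^{1,0}L$ and the $(1,0)$ normal space.

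For the second inequality, I decompose $d\Psi$ into its $J_\Psi$-complex-linear and complex-antilinear parts $d\Psi = \p\Psi + \bp\Psi$; these are pointwise orthogonal in the Hilbert--Schmidt norm, so $|\na\Psi|^2 = |\p\Psi|^2 + |\bp\Psi|^2 \geq |\p\Psi|^2$, which is all that is needed (we simply discard the antiholomorphic part). Combining the three displayed relations gives the stated bound.

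The main obstacle lies entirely in the first step: correctly tracking the normalization conventions for the complexified frame $\{e_\nu', e_\mu'\}$ and its dual $\{\zeta_\nu', \zeta_\mu'\}$ so that the universal constant comes out to exactly $2$ rather than some other multiple, since the sharp value of this constant is precisely what produces the clean inequality $|H|^2 \leq 2|\na\Psi|^2$. Everything else --- the evaluation of $|i_H\Omega_\Psi|$ and the trivial estimate $|\p\Psi| \leq |\na\Psi|$ --- is routine once these conventions are fixed.
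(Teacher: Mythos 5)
Your proposal follows the same skeleton as the paper's proof: both rest on \eqref{plf}, both compute $|i_H\Omega_\Psi|^2=2|H|^2$ in the canonical frame adapted to $(\Psi,\Phi)$, and both then dominate the holomorphic derivative of $\Psi$ by the full energy density. However, there is a genuine gap at the step you dismiss as ``taking norms.'' The equation \eqref{plf} equates the scalar-valued form $i_H\Omega_\Psi$ with $\p\Psi$, where $\Psi$ is an $\S^2$-valued map; so $\p\Psi$ only acquires a meaning (and a norm) after one fixes a local complex coordinate on the target. Different stereographic charts multiply the coordinate representative of $\p\Psi$ by the derivative of a M\"obius transformation, so \eqref{plf} cannot hold in every chart at once, and the Euclidean norm of a representative differs from the intrinsic round-metric norm by the conformal length factor $2/(1+|\Theta|^2)\in(0,2]$. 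Hence your key identity $|i_H\Omega_\Psi|^2=4|\na\Psi|^2_{(1,0)}$ --- with the round-metric Hilbert--Schmidt norm, which is what your final step requires --- is precisely the assertion that the chart in which \eqref{plf} is valid is an infinitesimal isometry at the point in question, and that is not automatic. If \eqref{plf} were instead normalized in the chart centered at $\Psi(x)$, where the length factor is $2$, formal norm-taking would output a different constant. Since the entire content of the proposition is the sharp constant $2$, this cannot be waved through. Note also that you locate the difficulty in the wrong place: the domain-side frame conventions giving $|i_H\Omega_\Psi|^2=2|H|^2$ are routine (your sketch of them is correct); the target-side normalization of $\p\Psi$ is the crux.

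This missing bookkeeping is exactly what the paper's proof supplies. It rotates the hyperk\"ahler triple to the basis $(J_1',J_2',J_3')$ adapted to $(\Psi,\Phi)$ at $x$, so that $\Psi(x)$ has coefficient vector $(1,0,0)$; it represents $\Psi$ by the stereographic coordinate $\Theta=(a_1'+\sqrt{-1}a_2')/(1-a_3')$, for which $\Theta(x)=1$ lies on the unit circle, where stereographic projection is an infinitesimal isometry; it reads \eqref{plf} as $i_H\Omega_\Psi+2\sqrt{-1}\,\p\Theta=0$ at $x$ in this particular representation; and it then checks by hand, using $da_1'|_x=0$ and the $O(3)$-invariance of $|\na\va'|$, that $|\p\Theta|^2\leq|\na\va'|^2=|\na\Psi|^2$. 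To repair your argument you must either carry out this adapted-chart computation or verify from Leung--Wan's derivation that \eqref{plf} is normalized so that $\p\Psi$ carries the round metric. Your remaining ingredient --- the orthogonal decomposition $|\na\Psi|^2=|\p\Psi|^2+|\bp\Psi|^2$ with the antilinear part discarded --- is correct and in fact marginally cleaner than the paper's Cauchy--Schwarz estimate, but it only becomes usable once the normalization issue is settled.
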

\begin{proof}
For a fixed $x \in L$, we set
\[
J'_1=J_{\Psi}(x), \quad J'_2=J_\Phi (x), \quad J'_3=J'_1 J'_2.
\]
We would like to call it the {\it canonical basis adapted to $(\Psi,\Phi)$ at $x$}. Then we set the coefficient $\va'=(a'_1,a'_2,a'_3)$ as $J_\Psi=\sum_d a'_d J'_d$. We take a local representation of $\Psi$:
\[
\Theta(p)=\frac{a'_1(p)+\sqrt{-1} a'_2(p)}{1-a'_3(p)}
\]
via stereographic projection. Then the formula \eqref{plf} yields that
\[
i_H \Omega_\Psi+2 \sqrt{-1} \p \Theta=0 \quad \text{at $x$}.
\]
From the construction, we know that
\[
a'_1(x)=1, \quad a'_2(x)=a'_3(x)=0, \quad \Theta(x)=1.
\]
Also since $L$ is hyper-Lagrangian with the complex phase $\Psi$, the derivative $\bar{\na} J_\Psi$ is spanned by $J'_2$ and $J'_3$ at $x$, so
\[
da'_1|_x=0.
\]
Thus we have
\[
\p \Theta|_x=\sqrt{-1} \p a'_2|_x+\p a'_3|_x,
\]
\[
|\p \Theta|^2 \leq 2(|\p a'_2|^2+|\p a'_3|^2)=|da'_2|^2+|da'_3|^2=|\na \va'|^2
\]
at $x$. On the other hand, if we set $H=-\sum_\a H^{\a} e_{\a}$, one can easily observe that
\[
i_H \Omega_\Psi=-\sqrt{-1} \sum_\mu (H^{2\mu-1}-\sqrt{-1} H^{2 \mu}) \zeta_{\mu}',
\]
\[
|i_H \Omega_\Psi|^2=2|H|^2.
\]
So we have
\[
|H|^2=2|\p \Theta|^2 \leq 2|\na \va'|^2.
\]
We note that $\va'$ and $\Theta$ heavily depend on the choice of the basis $(J'_1,J'_2,J'_3)$ whereas $\va$ only depends on the background basis $(J_1,J_2,J_3)$. However, the point is that the norm $|\na \va'|^2$ is independent of the choice of an orthogonal basis $(J'_1,J'_2,J'_3)$ since the Euclidean metric on $\R^3$ is invariant under the standard $O(3)$-action. So we have $|\na \va'|=|\na \va|=|\na \Psi|$ and $|H|^2 \leq 2|\na \Psi|^2$.
\end{proof}
We also remark that the quantity $|\na \Psi|$ has the following three equivalent definitions:
\begin{itemize}
\item We regard the complex phase $\Psi$ as a map $\va \colon L \to \S^2 \subset \R^3$, and define $|\na \Psi|$ as the energy density of $\va$:
\[
|\na \va|^2=\sum_d |\na a_d|_g^2.
\]
\item We define $|\na \Psi|$ as the energy density of $\Psi \colon L \to \S^2$, \ie a map into $\S^2$ (also see \eqref{etd}).
\item We define $|\na \Psi|$ as the norm of the covariant derivative of $J_\Psi$ along $L$:
\[
|\bar{\na} J_{\Psi}|^2=\sum_{i,A,B} \bar{g}((\bar{\na}_i J)(e_A),e_B)^2,
\]
where $\bar{\na}$ denotes the Levi-Civita connection on the ambient space $(M,\bar{g})$. Then, taking account into the fact that $\{J_d\}$ is parallel and $\langle J_d, J_e \rangle_{\bar{g}}=4n \d_{de}$, we have $\bar{\na} J_{\Psi}=\sum_d da_d \otimes J_d$ and $|\bar{\na} J_{\Psi}|=2 \sqrt{n} |\na \va|$.
\end{itemize}
As for the relation to the second fundamental form $A$, we have the following:
\begin{prop} \label{bja}
In the canonical frame adapted to $(\Psi,\Phi)$, the quantity $|\bar{\na} J_{\Psi}|^2$ is expressed as
\[
|\bar{\na} J_{\Psi}|^2=4 \sum_{i,\nu,\mu} \big[(h^{2\mu-1}_{2\nu,i}-h^{2\mu}_{2\nu-1,i})^2+(h^{2\mu-1}_{2\nu-1,i}+h^{2\mu}_{2\nu,i})^2 \big],
\]
where $h^{\a}_{ij}:=\bar{g}(e_i, \bar{\na}_j e_{\a})$. In particular, we have
\[
|\na \Psi| \leq c(n)|A|.
\]
\end{prop}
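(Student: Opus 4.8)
The plan is to compute $|\bar{\na} J_{\Psi}|^2 = \sum_{i,A,B} \bar{g}((\bar{\na}_i J_\Psi)e_A, e_B)^2$ directly in the canonical frame adapted to $(\Psi,\Phi)$, expanding
\[
(\bar{\na}_i J_\Psi)e_A = \bar{\na}_i(J_\Psi e_A) - J_\Psi(\bar{\na}_i e_A)
\]
and writing everything through the connection forms via $\bar{\na}_i e_A = \sum_B \varphi_{AB}(e_i)e_B$. The useful observation is that $h^{\a}_{ij} = \bar{g}(e_i, \bar{\na}_j e_{\a})$ is, up to sign and skew-symmetry of $\varphi$, just the mixed connection form, so whatever survives will automatically be expressed by the second fundamental form.

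First I would record two algebraic facts about the endomorphism $\bar{\na}_i J_\Psi$: differentiating $J_\Psi^2 = -{\rm Id}$ and $\bar{g}(J_\Psi\,\cdot\,,\cdot) = -\bar{g}(\cdot\,,J_\Psi\,\cdot\,)$ shows that $\bar{\na}_i J_\Psi$ is $\bar{g}$-skew-symmetric and anti-commutes with $J_\Psi$. Next, since the $J_d$ are parallel and $\sum_d a_d^2 \equiv 1$, at the point $x$ where we adapt $(J_1',J_2',J_3')$ to $(\Psi,\Phi)$ we have $da_1'|_x = 0$ (exactly as in the proof of Proposition \ref{hdp}), so $\bar{\na}_i J_\Psi|_x$ is a linear combination of $J_2' = J_\Phi$ and $J_3' = J_\Psi J_\Phi$. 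Both of these interchange $TL$ and $NL$ (indeed $J_\Phi e_i = e_{i+2n}$), so the tangent--tangent and normal--normal blocks of $\bar{\na}_i J_\Psi$ vanish identically; this is exactly the content forced by the structure equations \eqref{seq}. Consequently only the mixed components remain and
\[
|\bar{\na} J_\Psi|^2 = 2\sum_{i,j,\a} \bar{g}((\bar{\na}_i J_\Psi)e_j, e_\a)^2,
\]
the factor $2$ coming from the skew counterparts $\bar{g}((\bar{\na}_i J_\Psi)e_\a, e_j) = -\bar{g}((\bar{\na}_i J_\Psi)e_j, e_\a)$.

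Then I would evaluate these mixed components. Taking $j \in \{2\nu-1,2\nu\}$ and $\a \in \{2\mu-1,2\mu\}$ and using $J_\Psi e_{2\nu-1} = e_{2\nu}$, $J_\Psi e_{2\mu-1} = -e_{2\mu}$ together with skew-symmetry of $\varphi$, a short computation turns each $\bar{g}((\bar{\na}_i J_\Psi)e_j, e_\a)$ into $\pm(h^{2\mu-1}_{2\nu,i} - h^{2\mu}_{2\nu-1,i})$ or $\pm(h^{2\mu-1}_{2\nu-1,i} + h^{2\mu}_{2\nu,i})$. Summing the four choices of $(j,\a)$ for fixed $(\nu,\mu,i)$ reproduces each of the two bracketed squares twice, and with the overall factor $2$ this yields the factor $4$ and the claimed identity. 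The estimate is then elementary: expanding and using $(p\mp q)^2 \leq 2(p^2+q^2)$ bounds the right-hand side by $C\sum_{\a,i,j}(h^\a_{ij})^2 = C|A|^2$ for a dimensional constant $C=C(n)$, and combining with $|\bar{\na} J_\Psi| = 2\sqrt{n}\,|\na\Psi|$ (the third description of $|\na\Psi|$ recorded above) gives $|\na\Psi| \leq c(n)|A|$.

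I expect the main obstacle to lie in the bookkeeping of the middle step: one must verify carefully that the diagonal blocks of $\bar{\na}_i J_\Psi$ really vanish --- this is where the hyper-Lagrangian structure \eqref{seq} enters and where the differing sign conventions of $J_\Psi$ on the tangent and normal frames must be tracked --- and then collect the surviving mixed components into precisely the two squares of the statement while accounting for the multiplicity that produces the constant $4$. Everything else is routine once the frame action of $J_\Psi$ and the identification of $h^\a_{ij}$ with the mixed connection forms are in place.
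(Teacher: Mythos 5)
Your proof is correct, and its computational core coincides with the paper's: both expand $(\bar{\na}_i J_\Psi)e_j=\bar{\na}_i(J_\Psi e_j)-J_\Psi(\bar{\na}_i e_j)$ in the canonical frame and convert the surviving connection forms into the two combinations $h^{2\mu-1}_{2\nu,i}-h^{2\mu}_{2\nu-1,i}$ and $h^{2\mu-1}_{2\nu-1,i}+h^{2\mu}_{2\nu,i}$. The difference is organizational, but genuine. The paper obtains the vanishing of the tangent--tangent and normal--normal components by feeding the structure equations \eqref{seq} into this expansion, and then writes out all eight families of mixed components (tangent-to-normal and normal-to-tangent) to produce the multiplicity $4$. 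You instead establish the block structure once and for all by the pointwise span argument: since $da_1'|_x=0$ (the same observation as in the proof of Proposition \ref{hdp}), $\bar{\na}_i J_\Psi|_x$ lies in the span of $J_2'=J_\Phi$ and $J_3'=J_\Psi J_\Phi$, both of which interchange $TL$ and $NL$; then skew-symmetry of $\bar{\na}_i J_\Psi$ reduces the computation to the tangent-to-normal block alone, with a factor $2$ accounting for the rest. This buys a cleaner explanation of why only mixed components survive --- it isolates exactly what is needed (the $\S^2$-constraint on $\Psi$ plus the adapted frame, rather than the full strength of \eqref{seq}) --- and it halves the bookkeeping. Your multiplicity count (each bracketed square occurring twice among the four tangent-to-normal components, doubled by skew-symmetry) agrees with the paper's count of eight component families, and the final estimate $|\na\Psi|\leq c(n)|A|$ follows by the same elementary inequality in both arguments.
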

\begin{proof}
Set $J_{i,A,B}:=\bar{g}(\bar{\na}_i J_\Psi(e_A),e_B)$ for simplicity. We compute
\begin{eqnarray*}
(\bar{\na}J_{\Psi})(e_{2\nu-1})&=&\bar{\na}(e_{2\nu})-J_\Psi(\bar{\na}e_{2\nu-1}) \\
&=& \sum_j \varphi_{2\nu,j}e_j+\sum_\a \varphi_{2\nu,\a}e_\a-J_\Psi \bigg( \sum_j \varphi_{2\nu-1,j}e_j+\sum_\a \varphi_{2\nu-1,\a}e_\a \bigg).
\end{eqnarray*}
By using \eqref{seq}, we know that the first and third terms cancel each other out. So we have
\[
(\bar{\na}J_{\Psi})(e_{2\nu-1})=\sum_{\mu} \big[ (\varphi_{2\nu,2\mu-1}-\varphi_{2\nu-1,2\mu})e_{2\mu-1}+(\varphi_{2\nu,2\mu}+\varphi_{2\nu-1,2\mu-1})e_{2\mu} \big],
\]
and hence
\[
J_{i,2\nu-1,j}=0, \quad J_{i,2\nu-1,2\mu-1}=-h^{2\mu-1}_{2\nu,i}+h^{2\mu}_{2\nu-1,i}, \quad J_{i,2\nu-1,2\mu}=-h^{2\mu}_{2\nu,i}-h^{2\mu-1}_{2\nu-1,i}.
\]
In the same way, we can compute other terms by using \eqref{seq} as follows:
\[
\begin{split}
J_{i,2\nu,j}=0, \quad J_{i,2\nu,2\mu-1}=h^{2\mu}_{2\nu,i}+h^{2\mu-1}_{2\nu-1,i}, \quad J_{i,2\nu,2\mu}=-h^{2\mu-1}_{2\nu,i}+h^{2\mu}_{2\nu-1,i}, \\
J_{i,2\mu-1,\a}=0, \quad J_{i,2\mu-1,2\nu-1}=h^{2\mu-1}_{2\nu,i}-h^{2\mu}_{2\nu-1,i}, \quad J_{i,2\mu-1,2\nu}=-h^{2\mu}_{2\nu,i}-h^{2\mu-1}_{2\nu-1,i}, \\
J_{i,2\mu,\a}=0, \quad J_{i,2\mu,2\nu-1}=h^{2\mu-1}_{2\nu-1,i}+h^{2\mu}_{2\nu,i}, \quad J_{i,2\mu,2\nu}=h^{2\mu-1}_{2\nu,i}-h^{2\mu}_{2\nu-1,i}.
\end{split}
\]
So we obtain the desired formula.
\end{proof}
\section{hyper-Lagrangian mean curvature flow}
\subsection{Evolution of the coefficient vector}
We regard the complex phase $\Psi$ as a map into $\S^2 \subset \R^3$ and write $\va=(a_1,a_2,a_3)$. We compute the evolution equation of $\va$ when $\Psi$ evolves along the generalized harmonic map flow $\ddt \Psi=\Delta_t \Psi$.
\begin{lem}
Along the HLMCF, $\va$ satisfies
\begin{equation} \label{evc}
\bigg(\ddt-\Delta_t\bigg)\va=|\na \va|^2 \va.
\end{equation}
\end{lem}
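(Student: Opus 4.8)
The plan is to recognize the second equation of \eqref{HLMCF}, namely $\ddt \Psi_t = \Delta_t \Psi_t$, as the harmonic map heat flow for $\Psi$ viewed as the map $\va \colon L \to \S^2 \subset \R^3$, and then to rewrite its tension field in terms of the componentwise Laplace--Beltrami operator acting on the ambient $\R^3$-valued map. Recall the general fact that for a map $u$ into an isometrically embedded submanifold $N \subset \R^k$, the tension field is the $TN$-tangential projection of the componentwise Laplacian of $u$, the correction being the second fundamental form of $N$ evaluated on $du$. For the unit sphere $\S^2 \subset \R^3$ the outward unit normal at a point $p$ is $p$ itself, so this correction collapses to a single scalar identity, which is what I would isolate first.

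First I would record the constraint coming from $\va \in \S^2$. Since $|\va|^2 = \sum_d a_d^2 \equiv 1$ on $L$ for every $t$, differentiating in a direction $e_i$ of a local orthonormal frame gives $\sum_d a_d \na_i a_d = 0$. Differentiating once more and tracing over the frame yields
\[
\sum_d a_d\, \Delta_t a_d = -\sum_{i,d} (\na_i a_d)^2 = -|\na \va|^2,
\]
that is, $\langle \va, \Delta_t \va\rangle = -|\na \va|^2$, where from now on $\Delta_t \va$ denotes the componentwise Laplace--Beltrami operator with respect to $g_t$. This computation is purely spatial at a fixed time, so it is unaffected by the time-dependence of the evolving metric $g_t$.

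Next I would assemble the tension field. Decomposing $\Delta_t \va$ into parts tangent and normal to $\S^2$ at $\va$, the normal (radial) part is $\langle \Delta_t \va, \va\rangle \va = -|\na \va|^2 \va$, so the tension field --- the tangential part --- is
\[
\Delta_t \Psi = \Delta_t \va - \langle \Delta_t \va, \va\rangle \va = \Delta_t \va + |\na \va|^2 \va.
\]
Because $\va(x,t)$ stays on $\S^2$, the velocity $\ddt \va$ is automatically tangent to $\S^2$ at $\va$; under the identification of tangent vectors of $\S^2$ with their $\R^3$-representatives, the flow equation $\ddt \Psi_t = \Delta_t \Psi_t$ therefore reads $\ddt \va = \Delta_t \va + |\na \va|^2 \va$. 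Rearranging gives $\big(\ddt - \Delta_t\big)\va = |\na \va|^2 \va$, which is \eqref{evc}.

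The computations are routine; the only point demanding care is the bookkeeping of what ``$\Delta_t$'' means on the two sides --- the tension field of the $\S^2$-valued map $\Psi$ in the flow equation versus the componentwise Laplace--Beltrami operator appearing in \eqref{evc} --- together with the verification that $\ddt \va$, being $\S^2$-tangent, genuinely represents $\ddt \Psi_t \in T\S^2$. I expect no real obstacle here: the energy density $|\na \va|^2 = \sum_d |\na a_d|^2$ on the right-hand side is exactly the first of the three equivalent descriptions of $|\na \Psi|$ recorded above, so the source term is unambiguous.
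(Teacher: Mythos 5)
Your proof is correct, but it takes a genuinely different route from the paper's. The paper proves \eqref{evc} by explicit computation in spherical polar coordinates: it writes $\va$ in terms of $\Psi^{\theta},\Psi^{\varphi}$, computes the Christoffel symbols of the round metric $\tilde{g}$, unwinds the flow $\ddt \Psi=\Delta_t\Psi$ into two scalar evolution equations for $\Psi^{\theta}$ and $\Psi^{\varphi}$, and then reassembles these into the componentwise equation for $a_3$ (with $a_1,a_2$ analogous). You instead invoke the extrinsic characterization of the tension field for maps into an isometrically embedded target: since the unit normal of $\S^2\subset\R^3$ at $p$ is $p$ itself, $\Delta_t\Psi=\Delta_t\va-\langle \Delta_t\va,\va\rangle\va$, and the constraint $|\va|^2\equiv 1$ gives $\langle\va,\Delta_t\va\rangle=-|\na\va|^2$, so the flow equation becomes \eqref{evc} at once. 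Your argument is shorter, coordinate-free, and in one respect cleaner: the paper's polar coordinates degenerate at the poles (the symbol $\tilde{\Gamma}^{\theta}_{\theta\varphi}=\cos\Psi^{\varphi}/\sin\Psi^{\varphi}$ is singular where $\sin\Psi^{\varphi}=0$), so its computation is literally valid only where $\Psi$ avoids the poles and implicitly needs a rotation-of-coordinates or continuity remark; your identity holds globally. The price is that you import the standard fact that the tension field is the tangential projection of the componentwise Laplacian of the $\R^3$-valued representative, with correction given by the second fundamental form of the target contracted against the differential; this is classical and you state it accurately, and you also correctly flag the two points that need checking (that the spatial identity is unaffected by the time-dependence of $g_t$, and that $\ddt\va$ is $\S^2$-tangent so it represents $\ddt\Psi_t$), so there is no gap. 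One minor side effect of the paper's route is that it produces the polar expression \eqref{etd} for the energy density as a byproduct, which is cited elsewhere in the paper; your route does not yield that formula, but it is not needed for the lemma itself.
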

\begin{proof}
We take a polar coordinate  $(\theta, \varphi)$ of $\S^2$ and express $\va$ as
\[
\va=
\begin{pmatrix}
\cos \Psi^{\theta} \sin \Psi^{\varphi} \\
\sin \Psi^{\theta} \sin \Psi^{\varphi} \\
\cos\Psi^{\varphi}
\end{pmatrix},
\]
where we write $\Psi^{\theta}=\theta \circ \Psi$, $\Psi^{\varphi}=\varphi \circ \Psi$ for simplicity. Then
\[
\ddt \Psi=\ddt \Psi^{\theta} \cdot \frac{\p}{\p \theta} \circ \Psi+\ddt \Psi^{\varphi} \cdot \frac{\p}{\p \varphi} \circ \Psi.
\]
Let $(x^1,\ldots,x^{2n})$ be a local coordinate in $L$. Recall the definition of the tension field of $\Psi$:
\[
\Delta \Psi=\sum_{i,j=1}^n g^{i j} \hat{\na}_i \hat{\na}_j \Psi^{\theta} \cdot \frac{\p}{\p \theta} \circ \Psi+\sum_{i,j=1}^n g^{i j} \hat{\na}_i \hat{\na}_j \Psi^{\varphi} \cdot \frac{\p}{\p \varphi} \circ \Psi \in C^{\infty}(\Psi^{-1} T\S^2),
\]
where $\hat{\nabla}$ denotes the canonical connection on $\Psi^{-1} T\S^2$ associated to $g$ and the standard metric $\tilde{g}$ on $\S^2$. Then
\[
\hat{\na}_i \hat{\na}_j \Psi^{\a}=\na_i \na_j \Psi^{\a}+\sum_{\b,\g=\theta, \varphi} \tilde{\Gamma}^{\a}_{\b \g}(\Psi) \frac{\p \Psi^{\b}}{\p x^i} \cdot \frac{\p \Psi^{\g}}{\p x^j}, \quad \a=\theta, \varphi,
\]
where $\tilde{\Gamma}^{\a}_{\b \g}$ denotes the Christoffel symbol w.r.t. $\tilde{g}$. We can easily compute
\[
\tilde{g}_{\theta \theta}=\sin^2 \varphi, \quad \tilde{g}_{\theta \varphi}=0, \quad \tilde{g}_{\varphi \varphi}=1,
\]
\[
\tilde{\Gamma}^{\theta}_{\theta \theta}=\tilde{\Gamma}^{\varphi}_{\varphi \varphi}=0, \quad
\tilde{\Gamma}^{\theta}_{\theta \varphi}=\frac{\cos \varphi}{\sin \varphi}, \quad \tilde{\Gamma}^{\varphi}_{\theta \theta}=-\sin \varphi \cos \varphi.
\]
This implies that
\[
\ddt \Psi^{\theta}=\sum_{i,j=1}^n g^{i j} \hat{\na}_i \hat{\na}_j \Psi^{\theta}=\Delta \Psi^{\theta}+\frac{\cos \Psi^{\varphi}}{\sin \Psi^{\varphi}} \cdot \langle \na \Psi^{\theta}, \na \Psi^{\varphi} \rangle_g,
\]
\[
\ddt \Psi^{\varphi}=\sum_{i,j=1}^n g^{i j} \hat{\na}_i \hat{\na}_j \Psi^{\varphi}=\Delta \Psi^{\varphi}-\sin \Psi^{\varphi} \cos \Psi^{\varphi} \cdot |\na \Psi^{\theta}|_g^2.
\]
Since
\[
\Delta a_3=-\sin \Psi^{\varphi} \cdot \Delta \Psi^{\varphi}-\cos \Psi^{\varphi} \cdot |\nabla \Psi^{\varphi}|_g^2,
\]
\begin{equation} \label{etd}
|\na \va|^2=\sin^2 \Psi^{\varphi} \cdot |\na \Psi^{\theta}|_g^2+|\nabla \Psi^{\varphi}|_g^2,
\end{equation}
we have
\[
\ddt a_3=-\sin \Psi^{\varphi} \cdot \ddt \Psi^{\varphi}=\Delta a_3+|\na \va|^2 a_3.
\]
We can compute the evolution equation of $a_1$ and $a_2$ in the similar way.
\end{proof}
Applying the maximum principle to \eqref{evc}, we obtain
\begin{cor}[also see \cite{LW07}, Theorem 5.1] \label{mpl}
If $L_0$ satisfies $a_3>c$ for some constant $c \in (0,1)$ then $a_3>c$ holds along the HLMCF $L_t$ for all $t \in [0,T]$. In particular, the hemisphere condition $\Psi(L) \subset \S^2_+$ is preserved under the HLMCF.
\end{cor}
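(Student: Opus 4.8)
The plan is to read off a scalar reaction--diffusion equation for the third coordinate $a_3$ from the vector equation \eqref{evc} and then run the parabolic maximum principle on the evolving submanifold. Taking the third component of \eqref{evc} gives
\[
\Big(\ddt-\Delta_t\Big) a_3 = |\na \va|^2\, a_3
\]
on $L$ with the time-dependent induced metric $g_t$; this is a linear parabolic equation for $a_3$ whose zeroth-order coefficient $q:=|\na\va|^2$ is everywhere nonnegative. The entire argument hinges on this sign.

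Assuming $L$ is closed (so that spatial extrema are attained), I would track the spatial minimum $m(t):=\min_{x\in L} a_3(x,t)$. By Hamilton's trick $m$ is locally Lipschitz, and for a.e.\ $t$ one has $m'(t)=\partial_t a_3(x_t,t)$ at a point $x_t$ realizing the minimum, where $\na a_3(x_t,t)=0$ and $\Delta_t a_3(x_t,t)\geq 0$. Substituting the evolution equation yields the differential inequality
\[
m'(t)\geq q(x_t,t)\, m(t).
\]
Because $q\geq 0$, whenever $m(t)\geq 0$ we get $m'(t)\geq 0$, so $m$ cannot decrease while it is nonnegative. Since $m(0)>c>0$, a first-time argument (if $m$ first attained the value $c$ at some $t_0$, then $m\geq c>0$ on $[0,t_0]$ forces $m$ non-decreasing there, whence $m(t_0)\geq m(0)>c$, a contradiction) gives $m(t)\geq m(0)>c$ for all $t\in[0,T]$; that is, the pinching $a_3>c$ is preserved.

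For the \emph{in particular} clause, recall from \eqref{kag} that $a_3=\cos\a$, so the hemisphere condition $\Psi(L)\subset\S^2_+$ is precisely $a_3>0$; on the compact initial surface this means $\min_{L_0} a_3=c_0>0$, and the previous paragraph applied with $c=c_0$ keeps $a_3\geq c_0>0$ for all time. The main obstacle is not the reaction term---its favourable sign does the real work---but the rigorous justification of the maximum principle on a manifold carrying a time-dependent metric: one must know the minimum is attained and that $m(t)$ is differentiable a.e., which is where compactness of $L$ (or a bounded-geometry substitute ensuring the infimum is achieved) enters. Once \eqref{evc} is granted, no further curvature information about $M$ is needed.
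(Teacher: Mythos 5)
Your proof is correct and is exactly the argument the paper intends: the paper's ``proof'' consists of the single line ``applying the maximum principle to \eqref{evc}'', and your write-up supplies precisely those details---extracting the scalar equation $(\partial_t-\Delta_t)a_3=|\na\va|^2a_3$, noting the nonnegative zeroth-order coefficient, and running Hamilton's trick on the spatial minimum over the closed submanifold. The only trivial nitpick is in the last step: since $\min_{L_0}a_3=c_0$ gives $a_3\geq c_0$ rather than $a_3>c_0$, one should invoke the pinching argument with some $c\in(0,c_0)$ (or simply note that your minimum-tracking argument already yields $m(t)\geq m(0)>0$ directly).
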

\subsection{$L^2$-estimates}
Let $L \subset M$ be a hyper-Lagrangian submanifold with the complex phase $\Psi$. 
\begin{dfn}
We define the twistor energy of $L$ as the Dirichlet energy of the complex phase:
\[
\cT(L):=\int_{L} |\nabla \Psi|^2 d \mu.
\]
\end{dfn}
By using \eqref{evc}, we can obtain the exponential estimate for the twistor energy:
\begin{lem}[Exponential estimate for the twistor energy] \label{efa}
For the HLMCF $L_t$, we have
\[
\ddt \cT(L_t) \leq (-2\lambda_1(t)+C(n) \max_{L_t} |H| |A|+2\max_{L_t}|\na \Psi|^2) \cdot \cT(L_t),
\]
where $\lambda_1(t)>0$ denotes the first eigenvalue of the Laplacian $\Delta_t$.
\end{lem}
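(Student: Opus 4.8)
The plan is to regard the complex phase as the coefficient map $\va\colon L\to\S^2\subset\R^3$, so that $\cT(L_t)=\int_{L_t}|\na\va|^2\,d\mu_t$ with $|\na\va|=|\na\Psi|$, and to differentiate this integral directly. Three contributions appear: the variation of the inverse metric $g^{ij}$, the variation of $\va$ through the flow, and the variation of the volume form. I would record the standard mean curvature flow variations $\partial_t g_{ij}=-2\bar{g}(H,A_{ij})$ and $\partial_t(d\mu_t)=-|H|^2\,d\mu_t$, where $A_{ij}$ denotes the second fundamental form, so that $\partial_t g^{ij}$ is pointwise controlled by $|H|\,|A|$. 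Thus
\[
\ddt\cT(L_t)=\int_L(\partial_t g^{ij})\langle\partial_i\va,\partial_j\va\rangle\,d\mu_t+2\int_L g^{ij}\langle\partial_i\partial_t\va,\partial_j\va\rangle\,d\mu_t-\int_L|\na\va|^2|H|^2\,d\mu_t.
\]

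For the middle term, since $\va$ is $\R^3$-valued the spatial and temporal derivatives commute in fixed local coordinates, and integrating by parts with respect to $g_t$ (valid as $L$ is closed) turns it into $-2\int_L\langle\partial_t\va,\Delta_t\va\rangle\,d\mu_t$. I then substitute the evolution equation \eqref{evc}, namely $\partial_t\va=\Delta_t\va+|\na\va|^2\va$, and use that $|\va|\equiv1$ forces $\sum_d a_d\partial_i a_d=0$ and hence $\langle\va,\Delta_t\va\rangle=-|\na\va|^2$. This produces
\[
2\int_L g^{ij}\langle\partial_i\partial_t\va,\partial_j\va\rangle\,d\mu_t=-2\int_L|\Delta_t\va|^2\,d\mu_t+2\int_L|\na\va|^4\,d\mu_t,
\]
the second term being exactly the contribution of the positive curvature of the target $\S^2$.

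It remains to estimate the surviving pieces. The term $2\int_L|\na\va|^4\,d\mu_t\leq2\max_{L_t}|\na\Psi|^2\cdot\cT(L_t)$ gives the last summand; the metric-variation term is bounded by $C(n)\max_{L_t}|H|\,|A|\cdot\cT(L_t)$ using the pointwise bound on $\partial_t g^{ij}$; and the volume term $-\int_L|\na\va|^2|H|^2\,d\mu_t\leq0$ is simply discarded. Finally, expanding each component $a_d$ in eigenfunctions of $\Delta_t$ (eigenvalues $0=\la_0<\la_1(t)\leq\la_2(t)\leq\cdots$), the elementary inequality $\la_k^2\geq\la_1(t)\la_k$ yields $\int_L|\Delta_t\va|^2\,d\mu_t\geq\la_1(t)\int_L|\na\va|^2\,d\mu_t=\la_1(t)\cT(L_t)$, whence $-2\int_L|\Delta_t\va|^2\,d\mu_t\leq-2\la_1(t)\cT(L_t)$. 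Assembling these gives the claimed differential inequality. The one genuinely delicate point is the positive term $2\int_L|\na\va|^4$: coming from the positive curvature of $\S^2$, it cannot be absorbed into the dissipative term $-2\la_1\cT$ for free, and it is precisely this term — controlled only through the smallness of $\max_{L_t}|\na\Psi|^2$ — that reflects the possibility of finite-time blow-up for positively curved targets and must later be tamed by the hypotheses of Theorem \ref{coHLMCF}.
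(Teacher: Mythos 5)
Your proposal is correct and follows essentially the same route as the paper: the same three-term decomposition of $\ddt\cT$, integration by parts plus the evolution equation \eqref{evc} and the constraint $|\va|\equiv 1$ to produce $-2\int_L|\Delta_t\va|^2\,d\mu_t+2\int_L|\na\va|^4\,d\mu_t$, the spectral inequality $\int_L|\Delta_t\va|^2\,d\mu_t\geq\lambda_1(t)\int_L|\na\va|^2\,d\mu_t$, the bound $C(n)\max_{L_t}|H||A|$ on the metric-variation term, and discarding the negative volume term. The only cosmetic differences are that you derive $\langle\va,\Delta_t\va\rangle=-|\na\va|^2$ by spatial differentiation of $|\va|^2=1$ (the paper differentiates in $t$ and substitutes the flow equation) and that you spell out the eigenfunction-expansion justification of the spectral step, which the paper uses without comment.
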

\begin{proof}
First, we recall the evolution of the Riemannian metric on $L$ (for instance, see \cite{CL01}):
\[
\ddt g_{ij}=-2H^{\a} h_{ij}^{\a}.
\]
By using this and the expression of the energy density as the norm of the coefficient vector $|\na \Psi|^2=|\na \va|^2$, we compute
\[
\ddt \int_L |\na \va|^2 d \mu_t=2 \int_L \langle \na \ddt \va, \na \va \rangle d \mu_t+\int_L \sum_d \ddt g^{i j} \na_i a_d \na_j a_d d \mu_t-\int_L |\na \va|^2 |H|^2 d \mu_t.
\]
We estimate each term separately. The first term is
\begin{eqnarray*}
2 \int_L \langle \na \ddt \va, \na \va \rangle d \mu_t &=& 2 \int_L \langle \na ((\Delta+|\na \va|^2)\va), \na \va \rangle d \mu_t \\
&=& -2 \int_L |\Delta \va|^2 d \mu_t-2 \int_L |\na \va|^2 \langle \va, \Delta \va \rangle d \mu_t \\
&\leq& -2 \lambda_1 \int_L |\na \va|^2 d \mu_t+2 \int_L |\na \va|^4 d \mu_t \\
&\leq& -2 \lambda_1 \int_L |\na \va|^2 d \mu_t+2 \max_{L_t}|\nabla \va|^2 \int_L |\na \va|^2 d \mu_t,
\end{eqnarray*}
where we used the formula
\[
0=\langle \ddt \va, \va \rangle=\langle (\Delta+|\na \va|^2)\va, \va \rangle=\langle \Delta \va, \va \rangle+|\na \va|^2,
\]
which can be proved easily by differentiating $|\va|^2=1$ in $t$. For the second term, we have
\begin{eqnarray*}
\bigg| \int_L \sum_d \ddt g^{i j} \na_i a_d \na_j a_d d \mu_t \bigg| &=& \bigg|2 \int_L \sum_d H^{\a} h_{ij}^{\a} \na_i a_d \na_j a_d d \mu_t \bigg| \\
&\leq& C(n) \max_{L_t} |H| |A| \cdot \int_L |\na \va|^2 d \mu_t.
\end{eqnarray*}
This completes the proof of the Lemma.
\end{proof}
The above lemma says that we need to control $\lambda_1$ in order to obtain a bound for the twistor energy. So we establish the exponential estimate for $\lambda_1$ as follows:
\begin{lem}[Exponential estimate for the first eigenvalue] \label{efe}
Along the HLMCF, the first eigenvalue $\lambda_1(t)$ satisfies
\[
\ddt \lambda_1 \geq -(\max_{L_t}|H|^2+C(n) \max_{L_t} |H||A|) \cdot \lambda_1.
\]
\end{lem}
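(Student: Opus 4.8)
The plan is to differentiate $\lambda_1(t)$ directly through a normalized family of first eigenfunctions. For each $t$, let $f_t$ be a first eigenfunction of the Laplacian on the closed manifold $L_t$, so that $\Delta_t f_t=-\lambda_1(t)f_t$, normalized by $\int_L f_t^2\,d\mu_t=1$; since $\lambda_1(t)>0$, the function $f_t$ is $\Delta_t$-orthogonal to the constants, so the mean-zero condition $\int_L f_t\,d\mu_t=0$ holds automatically. With this normalization $\lambda_1(t)=\int_L|\na f_t|^2\,d\mu_t$. Recalling from the proof of Lemma \ref{efa} that the induced metric evolves by $\ddt g_{ij}=-2H^{\a}h^{\a}_{ij}$ along the HLMCF, I would first record the two consequences $\ddt g^{ij}=2H^{\a}h^{\a\,ij}$ (raising indices by $g$) and $\ddt(d\mu_t)=-|H|^2\,d\mu_t$.

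Differentiating $\lambda_1(t)=\int_L g^{ij}\na_i f_t\na_j f_t\,d\mu_t$ produces three terms,
\[
\ddt\lambda_1=\int_L(\ddt g^{ij})\na_if_t\na_jf_t\,d\mu_t+2\int_L\langle\na f_t,\na\dot f_t\rangle\,d\mu_t+\int_L|\na f_t|^2\,\ddt(d\mu_t),
\]
where $\dot f_t=\ddt f_t$. The first term is controlled pointwise by $|2H^{\a}h^{\a\,ij}\na_if_t\na_jf_t|\leq C(n)|H||A|\,|\na f_t|^2$, so it is bounded below by $-C(n)\max_{L_t}|H||A|\cdot\lambda_1$; the third term equals $-\int_L|\na f_t|^2|H|^2\,d\mu_t\geq-\max_{L_t}|H|^2\cdot\lambda_1$. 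For the middle term I would integrate by parts on the closed manifold and use the eigenfunction equation to write $2\int_L\langle\na f_t,\na\dot f_t\rangle\,d\mu_t=-2\int_L(\Delta_t f_t)\dot f_t\,d\mu_t=2\lambda_1\int_L f_t\dot f_t\,d\mu_t$, and then differentiate the normalization $\int_L f_t^2\,d\mu_t=1$ to get $2\int_L f_t\dot f_t\,d\mu_t=\int_L f_t^2|H|^2\,d\mu_t\geq0$; hence the middle term is nonnegative and only helps. Combining the three bounds yields exactly
\[
\ddt\lambda_1\geq-\big(\max_{L_t}|H|^2+C(n)\max_{L_t}|H||A|\big)\lambda_1.
\]

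The one delicate point is the differentiability of $\lambda_1(t)$ together with the existence of the smooth family $f_t$, which analytic perturbation theory guarantees only where the first eigenvalue is simple; I expect this to be the main (though minor) obstacle. It can be removed without any genericity hypothesis by passing to the variational characterization: fixing $t_0$ and the eigenfunction $f=f_{t_0}$, one uses the eigenfunction $f_t$ at time $t$ as a competitor in the Rayleigh quotient at $t_0$ to bound the lower-right difference quotient of $\lambda_1$ from below by $\p_t R(f,t)|_{t_0}$ with $f$ held fixed. Since $f$ is a critical point of the Rayleigh quotient, this explicit time derivative coincides with the three-term expression above — the $\dot f_t$ contribution being reproduced by the derivative of the denominator $\int_L f^2\,d\mu_t$ — so the same differential inequality holds in the sense of Dini derivatives, which is all that the subsequent Gr\"onwall argument requires.
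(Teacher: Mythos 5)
Your proof is correct and takes essentially the same route as the paper: both differentiate $\lambda_1(t)=\int_{L_t}|\na f_t|^2\,d\mu_t$ through the eigenfunction using $\ddt g_{ij}=-2H^{\a}h^{\a}_{ij}$, $\ddt(d\mu_t)=-|H|^2 d\mu_t$ and the differentiated normalization; the only cosmetic difference is that the paper integrates $-\int_L|\na f|^2|H|^2 d\mu_t$ by parts so that one piece cancels against the $\dot f$-term, whereas you discard the nonnegative $\dot f$-term and bound $-\int_L|\na f|^2|H|^2 d\mu_t\geq -\max_{L_t}|H|^2\cdot\lambda_1$ directly, arriving at the identical inequality. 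Your closing remark on handling possible non-simplicity of $\lambda_1$ via the Rayleigh quotient and Dini derivatives is a correct patch of a differentiability point that the paper's proof silently assumes.
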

\begin{proof}
Let $f$ be an eigenfunction w.r.t. $\lambda_1$, \ie $f$ satisfies
\[
-\Delta_t f=\lambda_1 f, \quad \int_L f^2 d \mu_t=1.
\]
Then the first eigenvalue $\lambda_1$ is
\[
\lambda_1=\int_L |\na f|^2 d \mu_t.
\]
Differentiating $\int_L f^2 d \mu_t=1$ in $t$, we have
\[
\int_L \bigg(2\ddt f \cdot f-f^2 |H|^2\bigg) d \mu_t=0.
\]
Thus we can compute
\begin{eqnarray*}
\ddt \lambda_1 &=& 2 \int_L \Big\langle \na \ddt f, \na f \Big\rangle d \mu_t+\int_L \ddt g^{ij} \na_i f \na_j f d \mu_t-\int_L |\na f|^2 |H|^2 d \mu_t \\
&=& -2 \int_L \ddt f \cdot \Delta f d \mu_t+2 \int_L H^{\a} h_{ij}^{\a} \na_i f \na_j f d \mu_t+\int_L f \Delta f \cdot |H|^2 d \mu_t \\
&+& \int_L f \langle \na f, \na |H|^2 \rangle d \mu_t.
\end{eqnarray*}
Using the relation $-\Delta f=\lambda_1 f$, we find that the first term and the third term cancel each other out. The second term can be estimates as
\[
\bigg| 2 \int_L H^{\a} h_{ij}^{\a} \na_i f \na_j f d \mu_t \bigg| \leq C(n) \max_{L_t} |H||A| \cdot \lambda_1.
\]
The fourth term is
\begin{eqnarray*}
\int_L f \langle \na f, \na |H|^2 \rangle d \mu_t &=& - \int_L (f \Delta f+|\na f|^2) |H|^2 d \mu_t \\
&=& \lambda_1 \int_L f^2 |H|^2 d \mu_t-\int_L |\na f|^2 |H|^2 d \mu_t \\
&\geq&- \max_{L_t} |H|^2 \cdot \lambda_1.
\end{eqnarray*}
Thus we obtain the desired result.
\end{proof}
\subsection{$C^0$-estimates}
In order to get the $C^0$-estimates from the $L^2$, the notion of non-collapsing geodesic ball is convenient.
Roughly speaking, it says that the volume of each geodesic ball in $L$ is bounded from below by that of the Euclidean geodesic ball of the same radius. Let $N$ be a compact Riemannian $m$-manifold.
\begin{dfn}
We say that
\begin{enumerate}
\item A geodesic ball $B(x,\r)$ in $N$ is called $\kappa$-noncollapsed if
\[
\frac{\Vol(B(y,s))}{s^m} \geq \kappa
\]
holds whenever $B(y,s) \subset B(x,\r)$.
\item A compact Riemannian manifold $N$ is called $\kappa$-noncollapsed on the scale $r$ if every geodesic ball $B(x,s)$ is $\kappa$-noncollapsed for $s \leq r$.
\end{enumerate}
\end{dfn}
\begin{lem} \label{efz}
Let $(E,h,D)$ be a vector bundle with a fiber metric $h$ and a compatible connection $D$ over a compact Riemmanian manifold $N$. Assume that $N$ is $\kappa$-noncollapsed on the scale $r$. For any smooth section $\s \in C^{\infty}(E)$, if
\[
|D \s| \leq \Lambda, \quad \int_N |\s
|^2 d \mu \leq \e \leq r^{m+2},
\]
then
\[
\max_N |\s| \leq (\Lambda+\kappa^{-1/2}) \e^{\frac{1}{m+2}}.
\]
\end{lem}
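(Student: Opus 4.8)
The plan is to run a standard ``small $L^2$-norm plus bounded gradient implies small sup-norm'' argument, with the radius of the test ball chosen so that the hypothesis $\e \leq r^{m+2}$ is exactly what keeps us within the non-collapsing scale. Let $x_0 \in N$ be a point at which $|\s|$ attains its maximum $m_0 := \max_N |\s|$ (such a point exists since $N$ is compact). The first ingredient is Kato's inequality: because $D$ is compatible with $h$, the scalar function $|\s|$ satisfies $|\na |\s|| \leq |D\s| \leq \Lambda$ wherever $|\s| \neq 0$ and is Lipschitz across the zero set, so $|\s|$ is globally $\Lambda$-Lipschitz on $N$. In particular, for every point $y$ in a geodesic ball $B(x_0,s)$ we obtain the lower bound $|\s|(y) \geq m_0 - \Lambda s$.

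The second ingredient is a volume lower bound. I would set $s := \e^{\frac{1}{m+2}}$; the hypothesis $\e \leq r^{m+2}$ guarantees $s \leq r$, so the non-collapsing assumption applies to $B(x_0,s)$ and, taking $y=x_0$ and $t=s$ in the definition of a $\kappa$-noncollapsed ball, yields $\Vol(B(x_0,s)) \geq \kappa s^m$. Suppose, for contradiction, that $m_0 > (\Lambda + \kappa^{-1/2}) s$. Combined with the Lipschitz bound this gives $|\s|(y) > (\Lambda + \kappa^{-1/2})s - \Lambda s = \kappa^{-1/2} s$ for every $y \in B(x_0, s)$. Integrating the square of this pointwise estimate over the ball and invoking the volume bound,
\[
\int_N |\s|^2 \, d\mu \geq \int_{B(x_0,s)} |\s|^2 \, d\mu > \kappa^{-1} s^2 \cdot \Vol(B(x_0,s)) \geq \kappa^{-1} s^2 \cdot \kappa s^m = s^{m+2} = \e,
\]
contradicting the hypothesis $\int_N |\s|^2 \, d\mu \leq \e$. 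Hence $m_0 \leq (\Lambda + \kappa^{-1/2})\e^{\frac{1}{m+2}}$, which is the claim.

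There is no serious obstacle here; the real content lies in the balancing of scales, and the two points that merit care are rather bookkeeping. First, one must invoke Kato's inequality to legitimize the gradient bound for the scalar function $|\s|$ even at zeros of $\s$, where $|\s|$ need not be smooth but remains Lipschitz. Second, one must check that the chosen radius $s = \e^{\frac{1}{m+2}}$ does not exceed $r$, which is precisely the role of the quantitative hypothesis $\e \leq r^{m+2}$: without it the non-collapsing estimate $\Vol(B(x_0,s)) \geq \kappa s^m$ would be unavailable and the contradiction would break down. The clean form of the constant $\Lambda + \kappa^{-1/2}$ is a consequence of this particular choice of $s$, which makes the Lipschitz decay term and the volume term enter on the same footing.
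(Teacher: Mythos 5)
Your proof is correct and follows essentially the same argument as the paper: both pick the maximum point, use the $\Lambda$-Lipschitz bound on $|\s|$ coming from $|D\s|\leq\Lambda$, choose the ball radius $\e^{\frac{1}{m+2}}$ (with $\e \leq r^{m+2}$ ensuring the non-collapsing hypothesis applies), and integrate the resulting pointwise lower bound over the ball to reach a contradiction. Your explicit appeal to Kato's inequality to justify the Lipschitz property of $|\s|$ is a minor point of extra care that the paper leaves implicit.
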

\begin{proof}
Assume that $|\s|$ attains its maximum at a point $x_0 \in N$ and the statement does not hold, \ie
\[
|\s(x_0)| > (\Lambda+\kappa^{-1/2}) \e^{\frac{1}{m+2}}.
\]
Then by setting $\d:=\e^{\frac{1}{m+2}}$, we get
\[
\Lambda \d=\Lambda \e^{\frac{1}{m+2}}<|\s(x_0)|.
\]
Thus for any $x \in B(x_0, \d)$, we have
\[
|\s(x)| \geq |\s(x_0)|-\Lambda \d>0.
\]
Integrating on $B(x_0, \d)$ yields that
\[
\e \geq \int_{B(x_0, \d)} |\s|^2 d \mu \geq (|\s(x_0)|-\Lambda \d)^2 \Vol(B(x_0, \d)) \geq (|\s(x_0)|-\Lambda \d)^2 \kappa \d^m,
\]
where we used $\d=\e^{\frac{1}{m+2}} \leq r$ and the assumption that $N$ is $\kappa$-noncollapsed on the scale $r$ in the last inequality. So putting $\d=\e^{\frac{1}{m+2}}$ into the above yields that $|\s(x_0)| \leq (\Lambda+\kappa^{-1/2}) \e^{\frac{1}{m+2}}$, contradicting the assumption. This completes the proof.
\end{proof}
Now we go back to our situation, so let $L_t$ be the HLMCF in a hyperk\"ahler $4n$-manifold $M$. The above lemma indicates that it is important to study the evolution of the volume ratio along the flow.
\begin{lem}[Volume ratio estimate] \label{ncl}
If $L_0$ is $\kappa_0$-noncollapsed on the scale $r_0$, then for any small geodesic ball $B_t(x,\r)$ in $L_t$ with radius $\r \in (0,r_0)$, we have
\[
\Vol(B_t(x,\r)) \geq \kappa_0 e^{-(2n+1)E(t)} \r^{2n},
\]
where $E(t)$ is given by
\[
E(t):=\int_0^t (\max_{L_s}|H|^2+\max_{L_s}|A||H|)ds.
\]
\end{lem}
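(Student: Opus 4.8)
The goal is to bound the volume of a geodesic ball $B_t(x,\r)$ in $L_t$ from below in terms of its initial value. The natural strategy is to track how the volume ratio $\Vol(B_t(x,\r))/\r^{2n}$ evolves along the flow, starting from the noncollapsing hypothesis at $t=0$. Since $L_0$ is $\kappa_0$-noncollapsed on the scale $r_0$, we have $\Vol(B_0(x,\r)) \geq \kappa_0 \r^{2n}$ at the initial time, so the plan is to control the time-derivative of the volume and integrate.

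**The volume evolution.** First I would fix a ball and differentiate the volume in $t$. Using the standard evolution of the Riemannian volume form under the MCF, namely $\ddt\, d\mu_t = -|H|^2\, d\mu_t$ (which follows from $\ddt g_{ij} = -2H^{\a} h_{ij}^{\a}$ already recorded in the proof of Lemma \ref{efa}, by taking the trace), one obtains a bound of the form
\[
\ddt \Vol(B_t(x,\r)) \geq -\max_{L_t}|H|^2 \cdot \Vol(B_t(x,\r)).
\]
The main subtlety here is that the \emph{domain} of integration is not static: as the metric $g_t$ evolves, the set of points within geodesic distance $\r$ of $x$ changes, so the geodesic ball $B_t(x,\r)$ is itself time-dependent. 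The radius is measured with respect to $g_t$, and the rate at which distances change is governed by $\ddt g_{ij} = -2H^{\a}h^{\a}_{ij}$, whose size is controlled by $\max_{L_t}|A||H|$. This is precisely why the exponent $E(t)$ contains \emph{both} the term $\max_{L_s}|H|^2$ (from the volume-form shrinking) and the term $\max_{L_s}|A||H|$ (from the distortion of the metric, hence of the ball).

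**Controlling the moving domain.** The hard part will be making the moving-boundary contribution rigorous and extracting the factor $2n$ in the exponent. The cleanest approach is to compare the geodesic ball $B_t(x,\r)$ measured in $g_t$ with a fixed reference: since $|\ddt g_{ij}| \leq 2\max_{L_t}|A||H| \cdot g_{ij}$ pointwise (as quadratic forms), the metrics $g_t$ are mutually comparable with a factor $\exp(\pm\!\int 2\max|A||H|)$, so a $g_0$-geodesic ball of radius $\r$ contains a $g_t$-geodesic ball of radius $\r\, e^{-\int \max|A||H|}$ and conversely. Combining the comparability of the metrics (which affects distances, hence the radius, contributing $2n$ copies of the $\max|A||H|$ term through the $2n$-dimensional scaling $\r^{2n}$) with the volume-form estimate (contributing the $\max|H|^2$ term plus one more copy of $\max|A||H|$), and then integrating the resulting differential inequality
\[
\ddt \log\!\bigg(\frac{\Vol(B_t(x,\r))}{\r^{2n}}\bigg) \geq -(2n+1)\big(\max_{L_t}|H|^2 + \max_{L_t}|A||H|\big)
\]
from $0$ to $t$, yields the stated bound with $E(t)$ as defined. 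I would keep careful track of how the $2n$-dimensional scaling of the radius interacts with the metric comparison to land exactly on the coefficient $(2n+1)$, as this bookkeeping is the only place where the precise constant could slip.
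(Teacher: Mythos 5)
Your proposal is correct and follows essentially the same route as the paper: distance distortion is controlled at rate $\max_{L_t}|A||H|$ (the paper gets this by differentiating the length of $g_t$-minimizing geodesics, you by comparing the metric tensors as quadratic forms), the volume form shrinks at rate $\max_{L_t}|H|^2$, and combining the resulting inclusion $B_0\big(x,e^{-\int_0^t\max|A||H|\,ds}\rho\big)\subseteq B_t(x,\rho)$ with the $\kappa_0$-noncollapsing of $L_0$ yields the factor $e^{-(2n+1)E(t)}$ ($2n$ from the shrunken radius, plus $1$ from the volume form). One small caution: the concluding differential inequality for $\log\big(\Vol(B_t(x,\rho))/\rho^{2n}\big)$ should not be taken literally, since differentiating the volume of a \emph{moving} geodesic ball produces a boundary flux term you cannot control by the ball volume; it is the ball-inclusion comparison you already wrote down, integrated over $[0,t]$, that does the actual work, exactly as in the paper's proof.
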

\begin{proof}
Let $\g_t$ be a length minimizing unit-speed geodesic w.r.t. $g(t)$ joining $p$ to $q \in B_t(p,\r)$. Then for every $t_0$ we have
\[
d_t(p,q)={\rm Length}_{g(t)}(\g_t) \leq {\rm Length}_{g(t)}(\g_{t_0}),
\]
and equality holds when $t=t_0$, which implies that
\[
\ddt d_t(p,q)|_{t=t_0}=\ddt {\rm Length}_{g(t)}(\g_t)|_{t=t_0}=\ddt {\rm Length}_{g(t)}(\g_{t_0})|_{t=t_0}.
\]
Thus we can compute
\[
\bigg| \ddt d_t(p,q) \bigg|=\bigg| \frac{1}{2} \int_0^{d_t(p,q)} \frac{d g_t}{dt} \bigg(\frac{d}{ds} \g_t, \frac{d}{ds} \g_t\bigg) ds \bigg| \leq \max_{L_t}|A||H| \cdot d_t(p,q).
\]
This implies that
\[
e^{-E(t)} d_0(p,q) \leq d_t(p,q) \leq d_0(p,q) e^{E(t)}, \quad d \mu_t \geq e^{-E(t)} d \mu_0.
\]
Since $L_0$ is $\kappa_0$-noncollapsed on the scale $r_0$, for $\r \leq r_0$, we have
\[
\Vol(B_t(p,\r)) =\int_{B_t(p,\r)} d \mu_t \geq \int_{B_0(p, e^{-E(t)} \r)} e^{-E(t)} d \mu_0 \geq \kappa_0 e^{-(2n+1)E(t)} \r^{2n}.
\]
The lemma is proved.
\end{proof}
\subsection{Some parabolic estimates for the HLMCF}
In this subsection, we prove some parabolic estimates for the HLMCF. The first lemma says that the HLMCF does not change a lot in short time intervals.
\begin{lem} \label{prs}
If $L_0$ satisfies
\[
|A|(0) \leq \Lambda, \quad |\na \Psi|(0) \leq P, \quad \lambda_1(0) \geq \d,
\]
then there exists $T=T(n,\Lambda,\bRm)$ such that the HLMCF $L_t$ satisfies
\[
|A|(0) \leq 2\Lambda,\quad |\na \Psi|(t) \leq 2P, \quad \lambda_1(t) \geq \frac{2}{3} \d, \quad t \in [0,T].
\]
\end{lem}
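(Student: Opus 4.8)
The three quantities $|A|$, $|\nabla\Psi|$ and $\lambda_1$ are almost decoupled, so the plan is to estimate each one separately by a maximum-principle/ODE-comparison argument, arranging in every case that the governing differential inequality be \emph{linear} on the interval where the bounds persist. Linearity is what forces the persistence time to depend only on $n$, $\Lambda$ and $\bRm$, and not on $P$ or $\d$. The only interaction is that the estimates for $|\nabla\Psi|$ and for $\lambda_1$ both use the bound $|A|(t)\le 2\Lambda$, so I would first establish the curvature bound and then feed it into the other two.

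First I would control $|A|$. Under the MCF the second fundamental form obeys the usual Simons-type evolution equation (see e.g. \cite{CL01}, \cite{Smo96}); in an ambient space of bounded geometry, after discarding the good term $-2|\nabla A|^2\le 0$ and absorbing the $\bar{\nabla}\bRm$-contribution by Young's inequality, it reads
\[
\Big(\ddt-\Delta_t\Big)|A|^2\le C(n)|A|^4+C(\bRm)\big(|A|^2+1\big).
\]
Since $L$ is compact the maximum principle reduces this to $\dot u\le C(n)u^2+C(\bRm)(u+1)$ for $u(t):=\max_{L_t}|A|^2$, with $u(0)\le\Lambda^2$. The comparison solution stays below $(2\Lambda)^2$ on an interval $[0,T_1]$ with $T_1=T_1(n,\Lambda,\bRm)$, so $|A|(t)\le 2\Lambda$ there.

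Next I would control $e:=|\nabla\Psi|^2=|\nabla\va|^2$. Using that $\va$ solves \eqref{evc} and that $|\va|^2=1$ forces $\va\cdot\nabla\va=0$, a Bochner computation on the evolving manifold $(L,g_t)$ gives
\[
\Big(\ddt-\Delta_t\Big)e=-2|\nabla^2\va|^2+\Big(\ddt g^{ij}\Big)\nabla_i\va\cdot\nabla_j\va-2\sum_d\Ric_L(\nabla a_d,\nabla a_d)+2e^2 .
\]
Here $\ddt g^{ij}=2H^\alpha h^{\alpha ij}$ is bounded by $C(n)|A|^2$, while by the Gauss equation $\Ric_L$ is controlled by $|A|^2+\bRm$; discarding $-2|\nabla^2\va|^2\le0$ yields
\[
\Big(\ddt-\Delta_t\Big)e\le\big(C(n)\max_{L_t}|A|^2+C(\bRm)\big)e+2e^2 .
\]
The key point is that on $[0,T_1]$ Proposition \ref{bja} gives the a priori bound $e\le c(n)^2|A|^2\le 4c(n)^2\Lambda^2$, so the quadratic term may be absorbed as $2e^2\le 8c(n)^2\Lambda^2\,e$, leaving a purely \emph{linear} inequality. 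Thus $w(t):=\max_{L_t}e$ is a subsolution of $\dot w\le b\,w$ with $b=b(n,\Lambda,\bRm)$, whence $w(t)\le P^2 e^{bt}$. Choosing $T_2:=(\ln 4)/b$, which is independent of $P$, gives $e(t)\le 4P^2$, i.e. $|\nabla\Psi|(t)\le 2P$, on $[0,\min\{T_1,T_2\}]$.

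Finally I would control $\lambda_1$ directly from Lemma \ref{efe}. On the interval where $|A|\le 2\Lambda$ one has $|H|\le\sqrt{2n}\,|A|\le C(n)\Lambda$, so the coefficient is bounded, $\max_{L_t}|H|^2+C(n)\max_{L_t}|H||A|\le c(n,\Lambda)=:c$, and the inequality $\ddt\lambda_1\ge-c\lambda_1$ integrates to $\lambda_1(t)\ge\d e^{-ct}$. Choosing $T_3$ with $e^{-cT_3}\ge\tfrac23$ gives $\lambda_1(t)\ge\tfrac23\d$, again independently of $\d$. Setting $T:=\min\{T_1,T_2,T_3\}=T(n,\Lambda,\bRm)$ finishes the proof. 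The main technical burden is the derivation of the two evolution equations—in particular the bookkeeping of the $\ddt g^{ij}$ and ambient-curvature terms; the decisive conceptual point is the use of Proposition \ref{bja} to linearize the $|\nabla\Psi|^2$ estimate, without which the persistence time would inevitably depend on $P$.
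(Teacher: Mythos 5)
Your proposal is correct and follows essentially the same route as the paper: the $|A|$ doubling estimate (which the paper simply cites from \cite[Lemma 2.2]{HS12} rather than re-deriving via Simons' inequality and ODE comparison), the exponential estimate of Lemma \ref{efe} for $\lambda_1$, and—decisively—the Bochner formula for $|\na\Psi|^2$ linearized via Proposition \ref{bja} so that the persistence time depends only on $n$, $\Lambda$, $\bRm$. The only cosmetic difference is that you write the Bochner identity extrinsically for the coefficient vector $\va$ (with reaction term $2e^2$) while the paper writes it intrinsically for the map $\Psi\colon L\to\S^2$ (with the $\Rm^{\S^2}\ast(\na\Psi)^4$ term); these are equivalent.
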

\begin{proof}
The estimate of $|A|$ follows from \cite[Lemma 2.2]{HS12}. Then the estimate of $\lambda_1$ follows from the exponential estimate for $\lambda_1$. Finally, we establish the estimate for $|\na \Psi|$. By the Bochner identity, Gauss equation and Proposition \ref{bja}, we can compute
\begin{align*}
	\bigg(\ddt-\Delta_t\bigg)|\na \Psi|^2 &=-2|\na^2\Psi|^2+\Rm^{\S^2} \ast (\na\Psi)^4
+\bRm \ast(\na\Psi)^2+A^2\ast(\na\Psi)^2\\
&\leq C(n, \Lambda,\bRm)|\na\Psi|^2. 
\end{align*}
Applying the maximum principle, we obtain
\[
|\na \Psi|(t) \leq e^{\frac{1}{2}C(n, \Lambda,\bRm)t}|\na \Psi|(0) \leq e^{\frac{1}{2}C(n, \Lambda,\bRm)t} P,
\]
so we may take $T \leq \frac{2\log 2}{C(n,\Lambda,\bRm)}$.
\end{proof}
We can obtain not only the usual smoothing estimates for $A$, but also for $\Psi$ with the help of Proposition \ref{bja}.
\begin{lem}[Smoothing estimates] \label{smg}
Suppose along the HLMCF, we have
\[
\sup_{L_t}|A| \leq \Lambda, \quad t \in [0,T]
\]
for some $T>0$. Then for each $l \geq 1$, there exist constants $\Lambda_l=\Lambda_l (n,\Lambda,\bRm, T)$ such that 
\[
\sup_{L_t}|\na^l A| \leq \frac{\Lambda_l}{t^{l/2}}, \quad t\in(0, T].  
\]
Moreover, for any $t_0\in(0, T]$, there exist constants $P_l=P_l(n, \Lambda, \bRm, t_0, T)$ such that 
\[
\sup_{L_t}|\na^l \Psi_\ast| \leq P_l, \quad t \in [t_0,T], 
\]
where $\Psi_*=\na\Psi$ is the differential map of the complex phase $\Psi: L \to \S^2$. 
\end{lem}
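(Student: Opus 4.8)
The plan is to establish both families by the Bando--Shi maximum-principle method: derive the evolution equations for $|\na^l A|^2$ and for $|\na^l\Psi_*|^2$ along the HLMCF, form time-weighted sums of these quantities, and induct on $l$. Throughout, $L_t$ is compact, the ambient manifold has bounded geometry so that $\bRm$ and all of its covariant derivatives are bounded, and the target $(\S^2,\tilde g)$ has parallel curvature $\Rm^{\S^2}$ of bounded norm. Under the standing hypothesis $\sup_{L_t}|A|\leq\Lambda$ the mean curvature satisfies $\sup_{L_t}|H|\leq c(n)\Lambda$, and by Proposition~\ref{bja} we also have $\sup_{L_t}|\Psi_*|=\sup_{L_t}|\na\Psi|\leq c(n)\Lambda$ for every $t\in[0,T]$. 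I write $\ast$ for a contraction with universally bounded coefficients and absorb $n,\Lambda,\bRm$ and its derivatives into constants.

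For the first family I would record the evolution equation of the second fundamental form under mean curvature flow and differentiate it $l$ times, obtaining the schematic identity
\[
\Big(\ddt-\Delta_t\Big)|\na^l A|^2=-2|\na^{l+1}A|^2+\!\!\sum_{i+j+k=l}\!\!\na^iA\ast\na^jA\ast\na^kA\ast\na^lA+\!\!\sum_{i+j=l}\!\!\bar{\na}^{\,i}\bRm\ast\na^jA\ast\na^lA .
\]
The base case $l=0$ is the hypothesis $|A|\leq\Lambda$. For the inductive step I would form $f_l:=\sum_{j=0}^{l}\b_j\,t^{\,j}|\na^jA|^2$ and fix the constants $\b_l=1,\b_{l-1},\dots,\b_0$ in decreasing order of index, each so large that the good term $-2\b_j t^{\,j}|\na^{j+1}A|^2$ produced at level $j$ dominates the weight- and reaction-terms of size $\sim t^{\,j}|\na^{j+1}A|^2$ coming from level $j+1$; here the inductive bounds $t^{\,m}|\na^m A|^2\leq C_m$ for $m<l$ control all remaining reaction terms. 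The maximum principle on the compact $L_t$ then gives $\sup_{L_t}f_l\leq f_l|_{t=0}+C_l\,T$, and since only the $j=0$ term survives at $t=0$ this yields $\sup_{L_t}|\na^lA|\leq\Lambda_l\,t^{-l/2}$. This is the standard interior smoothing estimate and presents no difficulty.

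For the second family I would work on the subinterval $[t_0/2,T]$, on which the first family already provides uniform bounds $\sup_{L_t}|\na^jA|\leq\Lambda_j(t_0/2)^{-j/2}$ for every $j$. Differentiating the harmonic map flow $\ddt\Psi=\Delta_t\Psi$ and using the Bochner formula together with the evolution $\ddt g_{ij}=-2H^\a h^\a_{ij}$ of the metric, the higher derivatives of $\Psi_*$ obey
\[
\Big(\ddt-\Delta_t\Big)|\na^l\Psi_*|^2=-2|\na^{l+1}\Psi_*|^2+\na^l\Psi_*\ast R_l ,
\]
where $R_l$ is a universal contraction of $\{\na^m\Psi_*\}_{m\leq l}$, $\{\na^mA\}_{m\leq l}$, $\Rm^{\S^2}$ and $\{\bar{\na}^{\,m}\bRm\}_{m\leq l}$; crucially, the coupling produced by the moving metric always pairs a top-order factor $\na^lA$ or $\na^l\Psi_*$ with strictly lower-order derivatives of $\Psi_*$. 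On $[t_0/2,T]$ every $A$-factor in $R_l$ is bounded, so I would form the shifted weighted sum $F_l:=\sum_{j=0}^{l}\b_j(t-t_0/2)^{\,j}|\na^j\Psi_*|^2$, choose the $\b_j$ backward as before, and run the same maximum-principle (Gronwall) argument. Since only the $j=0$ term $\b_0|\Psi_*|^2\leq\b_0c(n)^2\Lambda^2$ survives at the base time $t=t_0/2$, this gives $(t-t_0/2)^{l}|\na^l\Psi_*|^2\leq C_l$ on $[t_0/2,T]$; restricting to $[t_0,T]$, where $t-t_0/2\geq t_0/2$, yields $\sup_{L_t}|\na^l\Psi_*|\leq P_l(n,\Lambda,\bRm,t_0,T)$, with the $t_0$-dependence arising exactly from this shift.

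The main obstacle is the second family, and specifically verifying the structure of $R_l$: one must check that every term generated by commuting covariant derivatives and by the time-derivative of the Christoffel symbols (itself of the form $\na A\ast A$) reduces, up to bounded contractions, to a top-order derivative times a product of strictly lower-order $\Psi_*$-derivatives and uniformly bounded $A$-data, so that Young's inequality leaves no uncontrolled product of two top-order factors. Once this bookkeeping is in place the induction closes; the conceptual point is that the positive curvature of $\S^2$---which in general allows the harmonic map flow to blow up at small energy---is rendered harmless, because Proposition~\ref{bja} already bounds $|\na\Psi|$ pointwise in terms of the assumed bound on $|A|$, supplying precisely the $C^0$-control that the maximum principle requires.
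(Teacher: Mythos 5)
Your proposal is correct and follows essentially the same route as the paper: on $[t_0/2,T]$ the paper likewise derives the schematic evolution of $\na^l\Psi_\ast$ (with the same curvature and $A$-coupling source terms), uses Proposition \ref{bja} for the $C^0$ bound $|\Psi_\ast|\leq c(n)\Lambda$ that tames the positively curved target, and closes a Shi-type maximum-principle induction using shifted time-weighted quantities such as $(t-t_0/2)|\na\Psi_\ast|^2+\a|\Psi_\ast|^2$. The only cosmetic differences are that the paper cites Han--Sun \cite[Theorem 3.1]{HS12} for the $|\na^l A|$ smoothing estimates rather than rerunning the Bando--Shi argument, and it inducts with two-term weighted functions on nested intervals $[((l+1)/(l+2))t_0,T]$ instead of your single full weighted sum $\sum_{j}\b_j(t-t_0/2)^{j}|\na^j\Psi_\ast|^2$.
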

\begin{proof}
The estimate of $A$ follows from \cite[Theorem 3.1]{HS12}. Then for any $t_0\in (0, T]$ we have 
\[
\sup_{L_t}|\na^l A| \leq \frac{\Lambda_l}{(t_0/2)^{l/2}}, \quad t\in [t_0/2, T].  
\]
We use this estimate to show the estimate of $\Psi_\ast$. Note also that $|\Psi_\ast|$ has a uniform bound $|\Psi_\ast|\leq c(n)|A|\leq c(n)\Lambda$ by Proposition \ref{bja}.  

In order to derive the estimate of $\Psi_\ast$, we first compute the time derivative of $|\na^l\Psi_\ast|^2$ along the generalized harmonic map flow. A straight calculation shows that for each $l\geq 0$ we get the formula: 
\begin{align*}
	\ddt\na^l\Psi_\ast &= \Delta(\na^l \Psi_\ast)+\sum_{r+i+j+k=l}\tilde{\na}^r \Rm^{\S^2} \ast (\Psi_\ast)^r \ast \na^{i}\Psi_\ast \ast \na^{j}\Psi_\ast \ast \na^{k}\Psi_\ast\\
	&+\sum_{r+i_i+\cdots+i_l+j=l}\bar{\na}^r \bRm \ast\na^{i_1-1}A\ast \cdots \ast \na^{i_l-1}A\ast \na^j\Psi_\ast \\
	&+\sum_{i+j+k=l}\na^iA\ast \na^jA\ast \na^k\Psi_\ast, 
\end{align*}
where $\tilde{\na}$ denotes the Levi-Civita connection on $T\S^2$. 
It follows that for $t\in [t_0/2, T]$ we have 
\begin{align}\label{hde}
	\ddt|\na^l\Psi_\ast|^2 &= A^2\ast (\na^l\Psi_\ast)^2 + 2\Big\langle \ddt\na^l\Psi_\ast, \na^l \Psi_\ast\Big\rangle\nonumber \\
	& \leq \Delta |\na^l\Psi_\ast|^2-2|\na^{l+1}\Psi_\ast|^2 + C\sum_{0\leq i+j+k\leq l}|\na^i\Psi_\ast||\na^j\Psi_\ast||\na^k\Psi_\ast||\na^l\Psi_\ast|, 
\end{align}
where $C=C(n, \Lambda, \bRm, t_0, T)$ is a constant. From \eqref{hde} we have 
\[
\ddt|\Psi_\ast|^2\leq \Delta|\Psi_\ast|^2-2|\na \Psi_\ast|^2+c_1
\]
and 
\[
\ddt|\na\Psi_\ast|^2\leq \Delta|\na \Psi_\ast|^2-2|\na^2 \Psi_\ast|^2+c_2|\na\Psi_\ast|^2 + c_3,  
\]
where $c_k=c_k(n, \Lambda, \bRm, t_0, T) \; (k=1, 2, 3)$ are constants. 
Set 
\[
F:= (t-t_0/2)|\na \Psi_*|^2 + \a|\Psi_*|^2, 
\]
where $\a$ is a constant which will be determined later. It is not difficult to see 
\[
\Big(\ddt-\Delta\Big)F\leq (-2\a+1+Tc_2)|\na \Psi_\ast|^2+\a c_1+Tc_3. 
\]
Then we choose $\a=(1+TC_2)/2$ to get 
\[
\Big(\ddt-\Delta\Big)F \leq \Big(\frac{1+Tc_2}{2}\Big)c_1+Tc_3. 
\]
Applying the maximum principle, we have 
\[
F(t)\leq F(0)\leq \Big(\frac{1+Tc_2}{2}\Big)\Lambda^2=C_1(n, \Lambda, \bRm, t_0, T), \quad t\in [t_0/2, T]. 
\]
Hence we get 
\[
|\na\Psi_\ast|^2\leq \frac{C_1}{t-t_0/2}, \quad t\in (t_0/2, T]. 
\] 
It follows 
\[
\sup_{L_t}|\na \Psi_\ast|\leq\frac{6C_1}{t_0}=P_1(n, \Lambda, \bRm, t_0, T), \quad t \in [2t_0/3,T]. 
\]
This proves the case $l=1$. 

For $l\geq 2$, we prove it by induction. Assume that the following estimate holds for each $0\leq m \leq l-1$: 
\[
\sup_{L_t}|\na^m \Psi_\ast| \leq \frac{(m+1)(m+2)C_m(n, \Lambda, \bRm, t_0, T)}{t_0}, \quad t \in [((m+1)/(m+2))t_0,T]. 
\]
Then by \eqref{hde} we have 
\[
\ddt|\na^{l-1}\Psi_\ast|^2 \leq \Delta|\na^{l-1}\Psi_\ast|^2-2|\na^{l}\Psi_\ast|^2+c_4
\]
and 
\[
\ddt|\na^{l}\Psi_\ast|^2 \leq \Delta|\na^{l}\Psi_\ast|^2-2|\na^{l+1}\Psi_\ast|^2+c_5|\na^{l}\Psi_\ast|^2+c_6,  
\]
for $t \in [(l/(l+1))t_0,T]$, where $c_k=c_k(n, \Lambda, \bRm, t_0, T) \; (k=4, 5, 6)$ are constants which are controlled by the lower order estimates. By the same way as $l=1$, using maximum principle we see 
\[
|\na^{l}\Psi_\ast|^2\leq \frac{C_{l}(n, \Lambda, \bRm, t_0, T)}{t-(l/(l+1))t_0}, \quad t\in ((l/(l+1))t_0, T]. 
\]
Therefore we obtain the desired bound 
\[
|\na^{l}\Psi_\ast|^2\leq \frac{(l+1)(l+2)C_{l}(n, \Lambda, \bRm, t_0, T)}{t_0}=:P_l(n, \Lambda, \bRm, t_0, T)
\]
for $t\in [((l+1)/(l+2))t_0, T]$. 
\end{proof}
\begin{rk} \label{ues}
From the smoothing estimates, for any $t_0 \in (0,T)$ we have
\[
\sup_{L_t}|\na^l A| \leq \Lambda_l (n,\Lambda, \bRm, t_0), \quad \sup_{L_t}|\na^l \Psi| \leq P_l(n,\Lambda, \bRm, t_0), \quad t \in [t_0/2,t_0].
\]
In particular, we have bounds for the derivatives $|\na^l A|$ and $|\na^l \Psi|$ for $l \geq 1$ at $t=t_0$.
On the other hand, as in the proof of the above lemma, it is not difficult to see that we have bounds which only depend on $n$, $A(t_0)$ and $\Psi_\ast(t_0)$ (including their higher order derivatives)
\[
\sup_{L_t}|\na^l A| \leq \Lambda_l (n,A(t_0),\bRm), \quad \sup_{L_t}|\na^l \Psi| \leq P_l(n,A(t_0), \Psi_\ast(t_0),\bRm), \quad t \in [t_0,T].
\]
Combining the both estimates on $[t_0,T]$, we obtain $T$-independent estimates
\[
\sup_{L_t}|\na^l A| \leq \Lambda_l(n,\Lambda,\bRm, t_0), \quad
\sup_{L_t}|\na^l \Psi| \leq P_l(n,\Lambda, \bRm, t_0), \quad t \in [t_0,T].
\]
We often use this property without mentioning in later arguments.
\end{rk}
\subsection{Convergence of the flow}
Now we are ready to prove the main theorem.
\begin{thm}[Theorem \ref{coHLMCF}]
Let $(M, \bar{g})$ be a hyperk\"ahler $4n$-manifold with bounded geometry. Suppose $L$ is a hyper-Lagrangian submanifold with the complex phase $\Psi_0$ which is smoothly immersed into $M$. Then for any $V_0$, $\Lambda_0$ and $\d_0>0$, there exists $\e_0=\e_0(n,V_0,\Lambda_0,\d_0, \bRm, \inj(M))>0$ such that if $L$ satisfies
\[
\Vol(L_0) \leq V_0, \quad |A|(0) \leq \Lambda_0, \quad \lambda_1(\Delta_L)(0) \geq \d_0, \quad \cT(L_0) \leq \e_0,
\]
then the hyper-Lagrangian mean curvature flow starting from $L$ converges smoothly, exponentially fast to a complex Lagrangian submanifold in $M$ for one of the hyperk\"ahler complex structure on $M$.
\end{thm}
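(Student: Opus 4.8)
The plan is to propagate the smallness of $\cT(L_0)$ for all time by a continuity argument, forcing exponential decay of the twistor energy and hence convergence to a minimal, constant-phase limit. By Lemma \ref{prs} the bounds $|A|(t) \leq 2\Lambda_0$ and $\lambda_1(t) \geq \tfrac{2}{3}\d_0$ hold on a definite interval $[0,T]$ with $T = T(n,\Lambda_0,\bRm)$, and by continuity of $\cT$ in $t$ we may also arrange $\cT(L_t) \leq 2\e_0$ there. I would then let $T^{\ast}$ be the supremum of times up to which
\[
|A|(t) \leq 2\Lambda_0, \qquad \lambda_1(t) \geq \tfrac{\d_0}{2}, \qquad \cT(L_t) \leq 2\e_0
\]
all hold, and aim to show that for $\e_0$ small each of these three inequalities can be \emph{strictly} improved on $[0,T^{\ast})$; the continuation criterion for the flow (boundedness of $|A|$) then forces $T^{\ast} = \infty$. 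Note all three bounds hold strictly at $t=0$, so the improved set is nonempty and the argument is well posed.

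The decisive step is to convert the integral smallness of $\cT$ into a pointwise smallness of $|\na\Psi|$. On $[0,T^{\ast})$ Proposition \ref{bja} gives $|\na\Psi| \leq c(n)|A| \leq 2c(n)\Lambda_0$, the smoothing estimates of Lemma \ref{smg} (made uniform in time by Remark \ref{ues}) bound $|\na\Psi_{\ast}| = |\na^2\Psi|$, and Lemma \ref{ncl} keeps $L_t$ noncollapsed with constant $\kappa_0 e^{-(2n+1)E(t)}$. Feeding these into the $C^0$-from-$L^2$ estimate of Lemma \ref{efz}, applied to $\s = \na\Psi$ with $m = 2n$, yields
\[
\max_{L_t}|\na\Psi| \leq C\, \cT(L_t)^{\frac{1}{2n+2}}
\]
for a constant $C$ depending only on the maintained bounds (valid once $2\e_0 \leq r_0^{2n+2}$). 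Together with Proposition \ref{hdp}, which gives $|H| \leq \sqrt{2}\,|\na\Psi|$, this makes the positive terms $C(n)\max_{L_t}|H||A| + 2\max_{L_t}|\na\Psi|^2$ appearing in Lemma \ref{efa} as small as we wish once $\e_0$ is small. Since $\lambda_1 \geq \tfrac{\d_0}{2}$, Lemma \ref{efa} then reads $\ddt \cT(L_t) \leq -\tfrac{\d_0}{2}\cT(L_t)$, so $\cT(L_t) \leq \e_0 e^{-\d_0 t/2}$, which strictly improves the energy bound.

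This exponential decay closes the remaining two estimates. Since $\max_{L_t}|H|^2 \leq 2\max_{L_t}|\na\Psi|^2 \leq C\,\cT(L_t)^{1/(n+1)}$ is integrable in $t$, the quantity $E(t)$ stays bounded uniformly, so the noncollapsing constant is bounded below for all time; and the same integrability makes the coefficient in Lemma \ref{efe} integrable, giving $\lambda_1(t) \geq \d_0 \exp(-\int_0^{\infty}(\cdots)\,ds) \geq \tfrac{2}{3}\d_0$ for $\e_0$ small, improving the eigenvalue bound. The one genuinely delicate point, which I expect to be the main obstacle, is re-deriving $|A|(t) \leq \tfrac{3}{2}\Lambda_0$: because the complex phase takes values in the positively curved $\S^2$, the harmonic-map part of the flow could in principle concentrate and form a singularity even for small energy (\cf \cite{CD90}). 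What rescues the situation is precisely Proposition \ref{bja}, $|\na\Psi| \leq c(n)|A|$, which ties the energy density to the second fundamental form so that the mean curvature flow smoothing estimates for $A$ simultaneously control $\Psi$; carrying this control uniformly over an infinite time interval, rather than merely on $[0,T]$, is where the argument must be handled most carefully.

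Once $T^{\ast} = \infty$ is established, convergence is standard. From $\max_{L_t}|H| \leq \sqrt{2}\max_{L_t}|\na\Psi| \leq C\,\e_0^{1/(2n+2)} e^{-\d_0 t/(2(2n+2))}$ one gets $\int_0^{\infty} \max_{L_t}|H|\,dt < \infty$, so the immersions $F_t$ have finite total displacement and converge in $C^0$ to a limit $F_{\infty}$; the uniform smoothing bounds upgrade this to smooth convergence along a subsequence, and the exponential decay of $|H|$ and of all higher derivatives promotes it to smooth, exponentially fast convergence of the whole flow. The limit $L_{\infty}$ satisfies $\na\Psi \equiv 0$, hence has constant complex phase $J_{\Psi_{\infty}} \in \S^2$ and is complex Lagrangian for that complex structure; by Proposition \ref{hdp} it is in addition minimal.
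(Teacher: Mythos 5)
Your overall architecture --- a bootstrap/continuity argument maintaining bounds on $|A|$, $\lambda_1$ and $\cT$, converting $L^2$-smallness of $\cT$ into pointwise smallness of $|\na\Psi|$ via noncollapsing (Lemma \ref{ncl}) and Lemma \ref{efz}, then feeding the exponential decay of $\cT$ back into Lemmas \ref{efa} and \ref{efe} --- coincides with the paper's Steps 1 and 2. But there is a genuine gap at precisely the point you flag as ``the main obstacle'': you never re-derive the improved bound on $|A|$, and the mechanism you offer cannot do it. Proposition \ref{bja} reads $|\na\Psi|\leq c(n)|A|$; it controls the phase \emph{by} the second fundamental form, not the other way around, so smallness of $\cT$ or of $|\na\Psi|$ gives no direct information about $|A|$. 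Likewise the smoothing estimates of Lemma \ref{smg} bound the derivatives $|\na^l A|$ only \emph{assuming} a zeroth-order bound on $|A|$; they cannot improve that bound. Without closing the $|A|$ estimate, $T^{\ast}$ may be finite and the entire continuity argument collapses --- this estimate is the heart of the matter, not a detail to be ``handled carefully.''

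The paper fills this gap with a separate, concrete argument that is absent from your proposal. One writes the evolution of the second fundamental form as $\ddt h^{\a}_{ij}=\na_i\na_j H^{\a}-H^{\b}h^{\b}_{jk}h^{\a}_{ik}+H^{\b}\bar{R}_{\a j \b i}+h^{\b}_{ij}b^{\b}_{\a}$, notes that the last term drops out of $\ddt|A|^2$ because $b^{\b}_{\a}$ is anti-symmetric, and obtains $\ddt|A|\leq c(n)\left(|\na^2 H|+|H||\bRm|+|H||A|^2\right)$. The crucial input is then exponential decay of $|\na^2 H|$, which does \emph{not} follow from decay of $|H|$ alone: it comes from the interpolation $\int_{L_t}|\na^2 H|^2\, d\mu_t\leq \int_{L_t}|H||\na^4 H|\, d\mu_t$, combined with the smoothing bound on $|\na^4 H|$, the exponential decay of $|H|$ (Proposition \ref{hdp} plus the decay of $|\na\Psi|$), and noncollapsing to pass from $L^2$ to $C^0$. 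Integrating the differential inequality for $|A|$ over the bootstrap interval then shows $|A|$ can grow by at most a quantity of size $C\eta^{\mathrm{power}}/\d$, arbitrarily small for small $\eta$, which closes the bootstrap. Your appeal to Proposition \ref{bja} addresses a different issue --- it is what rules out a small-energy harmonic-map singularity of the type in \cite{CD90} \emph{while} $|A|$ is under control --- but it is not what controls $|A|$. (Your Step-3 convergence argument, via integrability of $\max_{L_t}|H|$ and interpolation instead of Simon's theorem \cite{Sim83}, is a reasonable and slightly more direct alternative to the paper's, but it only becomes available once the $|A|$ gap is filled.)
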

\begin{proof}
\textbf{Step 1. (Reduction from $L^2$ to $C^0$)}:
In the first step, we see that after a short period of time, the parabolicity of the flow improves the initial $L^2$-condition for $\na \Psi$ to the $C^0$-condition. From Proposition \ref{bja} and Lemma \ref{prs}, we know that $L_t$ satisfies
\[
|A|(t) \leq 2\Lambda_0, \quad |\na \Psi|(t) \leq c(n) \Lambda_0, \quad \lambda_1(t) \geq \frac{2}{3} \d_0, \quad t \in [0,T_0]
\]
for $T_0=T_0(n,\Lambda_0,\bRm)$. So Lemma \ref{efa} implies the following exponential estimate for the twistor energy:
\[
\cT(L_t) \leq e^{ct} \cT(L_0) \leq \e_0 e^{ct}, \quad t \in [0,T_0]
\]
for some $c=c(n,\Lambda_0)>0$. Therefore we can choose $t_0=t_0(n,\Lambda_0) \in (0, T_0]$ so that
\[
\cT(L_t) \leq 2 \e_0, \quad t \in [0,t_0].
\]
On the other hand, by the smoothing estimates, we know that for any $l \geq 1$,
\begin{equation} \label{ssa}
|\na^l A|(t) \leq C_l(n,\Lambda_0,\bRm), \quad t \in [ t_0/2, t_0 ],
\end{equation}
and also
\[
|\na^2 \Psi|(t) \leq c(n,\Lambda_0,\bRm), \quad t \in [ t_0/2, t_0 ].
\]
In order to get the estimate for the energy density $|\na \Psi|$, we need to establish the non-collapsing estimate for $L_t$ at first. By \cite[Proposition 2.2]{CH10} and \eqref{ssa}, we know that the injectivity radius of $L$ is bounded from below along the HLMCF
\[
\inj(L_t) \geq \iota(n,\Lambda_0,\bRm,\inj(M))>0, \quad t \in [t_0/2,t_0].
\]
Meanwhile, the Gauss equation implies that
\[
|\Rm| \leq C(\Lambda_0, \bRm), \quad t \in [t_0/2,t_0].
\]
So in the same way as the proof of \cite[Theorem 1.1]{Li12}, the volume comparison theorem yields that there exists $\kappa=\kappa(n,\Lambda_0,\bRm,\inj(M))$ and $r=r(n,\Lambda_0,\bRm,\inj(M))$ such that $L_t$ is $\kappa$-noncollapsed on the scale $r$ for all $t \in [t_0/2,t_0]$.
So Lemma \ref{efz} implies that
\[
|\na \Psi|(t) \leq (c+\kappa^{-1/2}) (2 \e_0)^{\frac{1}{2n+2}}=:\eta, \quad t \in [ t_0/2,t_0 ],
\]
where we take $\e_0$ sufficiently small so that $2 \e_0 \leq r^{2n+2}$.

\vspace{2mm}

\textbf{Step 2. ($\e_0$-regularity)}:
We set
\begin{align*}
\cA(\kappa,r,\Lambda,P,\d):=
\left\{
\begin{array}{l|l}
 &\; \text{$L$ is a hyper-Lagrangian submanifold}\\
 L\subset M\; &\; \text{$L$ is $\kappa$-noncollapsed on the scale $r$}\\
 & |A|\; \leq \Lambda, \quad |\na \Psi| \leq P, \quad \lambda_1(\Delta_L) \geq \d
\end{array}
\right\}. 
\end{align*}
Without loss of generality, we regard $L_{t_0/2}$ as the initial data of the HLMCF, so we have
\[
L_t \in \cA(\kappa,r,\Lambda,\eta,\d), \quad t \in [0,t_0/2],
\]
where $\Lambda:=2\Lambda_0$, $\eta:=(c+\kappa^{-1/2}) (2 \e_0)^{\frac{1}{2n+2}}$, $\d:=\frac{2}{3}\d_0$.
So Lemma \ref{prs} combining with the volume ratio estimate (\cf Lemma \ref{ncl}) implies that we can choose a small $T^{\ast}>0$ such that
\[
L_t \in \cA\bigg(\frac{1}{3}\kappa,r,6\Lambda,2\eta^{\frac{1}{2n+2}},\frac{1}{3}\d\bigg), \quad t \in [0,T^{\ast}].
\]
Let $T^{\ast}$ be the maximal time such that the above estimate holds. Then in order to prove the long-time existence of the flow, it suffices to prove the following $\e_0$-regularity:
\begin{clm}
There exists a small $\eta>0$ (and hence small $\e_0>0$) such that
\[
L_t \in \cA\bigg(\frac{2}{3}\kappa, r, 3\Lambda, \eta^{\frac{1}{2n+2}}, \frac{1}{2} \d\bigg), \quad t \in [0,T^{\ast}].
\]
\end{clm}
Indeed, if $T^{\ast}<\infty$ then from the claim we have $L_t \in \cA(\frac{2}{3}\kappa, r, 3\Lambda, \eta^{\frac{1}{2n+2}}, \frac{1}{2} \d)$ for $t \in [0,T^\ast]$. By using Lemma \ref{prs} and volume ratio estimate again, we find that there exists $\tilde{T}>T^{\ast}$ such that $L_t \in \cA(\frac{1}{3}\kappa,r,6\Lambda,2\eta^{\frac{1}{2n+2}},\frac{1}{3}\d)$ for $t \in [0,\tilde{T}]$, contradicting the maximality of $T^{\ast}$.

First, we establish an estimate for $|\na \Psi|$. We know that
\[
\lambda_1(t) \geq \frac{1}{3} \d, \quad t \in [0,T^{\ast}].
\]
So if we choose $\eta>0$ small so that
\[
\lambda_1(t) \geq \frac{1}{4} \d+C(n) \cdot 3 \Lambda \cdot 2 \eta^{\frac{1}{2n+2}}+(2\eta^{\frac{1}{2n+2}})^2, \quad t \in [0,T^{\ast}],
\]
then the exponential estimate for the twistor energy (\cf Lemma \ref{efa}) implies
\[
\cT(L_t) \leq e^{-\frac{\d}{2} t} \cT(L_0) \leq \eta^2 V_0 e^{-\frac{\d}{2} t}, \quad t \in [0,T^{\ast}].
\]
By Lemma \ref{prs}, there exists some $t^{\ast}=t^{\ast}(n,\Lambda,\bRm) \in (0,T^{\ast})$ such that
\[
|\na \Psi| \leq 2\eta \leq \eta^{\frac{1}{2n+2}}, \quad t \in [0,t^{\ast}],
\]
for $\eta \leq \frac{1}{2}$. On the other hand, since $|A|(t) \leq 6\Lambda$ for $t \in [0,T^{\ast}]$, the smoothing estimates imply that
\[
|\na^2 \Psi| \leq C(n,\Lambda, \bRm), \quad t \in [t^{\ast},T^{\ast}].
\]
Thus we obtain
\begin{equation} \label{edp}
|\na \Psi|(t) \leq C(n,\Lambda,\kappa,r,V_0,\bRm) \cdot \eta^{\frac{1}{n+1}} e^{-\frac{\d t}{4n+4}}, \quad t \in [t^\ast,T^{\ast}].
\end{equation}
So we can choose $\eta>0$ small so that
\[
C(n,\Lambda,\kappa,r,V_0,\bRm) \cdot \eta^{\frac{1}{2n+2}} \leq 1
\]
and obtain
\[
|\na \Psi|(t) \leq \eta^{\frac{1}{2n+2}}, \quad t \in [0,T^{\ast}].
\]
Next, we compute $|A|$. By the smoothing estimates, for any $l \geq 1$, we have
\[
|\na^l A| \leq C_l(n,\Lambda,\bRm), \quad t \in [t^{\ast}, T^{\ast}].
\]
Thus we also have
\[
|\na^l H| \leq C_l(n,\Lambda,\bRm), \quad t \in [t^{\ast}, T^{\ast}].
\]
From Proposition \ref{hdp} and \eqref{edp}, we know that $|H|$ also decreases exponentially fast. So integrating by parts, we have
\[
\int_{L_t} |\na^2 H|^2 d \mu_t \leq \int_{L_t} |H| |\na^4 H| d \mu_t \leq C(n,\Lambda,\kappa,r,V_0,\bRm) \eta^{\frac{1}{n+1}} e^{-\frac{\d t}{4n+4}}
\]
for $t \in [t^\ast,T^{\ast}]$. So we have
\[
|\na^2 H| \leq c(n,\Lambda,\kappa,r,V_0,\bRm) \eta^{\frac{1}{2(n+1)^2}} e^{-\frac{\d t}{8(n+1)^2}}, \quad t \in [t^{\ast}, T^{\ast}].
\]
We recall the evolution equation of $A$ along the MCF (\cf \cite{CL01})
\begin{align*}
	\ddt h^{\a}_{ij}=\na_i\na_j H^\a-H^{\b}h^{\b}_{jk}h^{\a}_{ik}+H^{\b}\bar{R}_{\a j \b i}+h^{\b}_{ij}b^{\b}_{\a}, 
\end{align*}
where $b^{\b}_{\a}=\bar{g}(\ddt e_{\a}, e_\b)=\bar{g}(\bar{\na}_{H}e_\a, e_\b)$.
Note that $b^{\b}_{\a}$ is anti-symmetric since  
\[
0=\ddt(\bar{g}(e_\a, e_\b))=b_{\a}^\b+b_{\b}^{\a}. 
\]
Then it follows 
\begin{align*}
	h^{\a}_{ij}h^{\b}_{ij}b^{\a}_{\b}=0. 
\end{align*}
So we compute 
\begin{align*}
	2|A|\ddt|A|=\ddt|A|^2\leq c(n)(|\na^2 H||A|+|H||A||\bRm|+|H||A|^3).
\end{align*}
Dividing  both sides by $|A|$, we have
\begin{align} \label{efs}
	\ddt|A|\leq c(n)(|\na^2 H|+|H||\bRm|+|H||A|^2). 
\end{align}
Meanwhile, Lemma \ref{prs} shows that
\[
|A|(t) \leq 2\Lambda, \quad t \in [0,t^{\ast}].
\]
So integrating \eqref{efs} in $t$ and using the exponential decay of $|H|$, we have
\begin{eqnarray*}
|A|(t) &\leq& |A|(t^{\ast})+c(n) \int_{t^{\ast}}^t(|\na^2 H|+|H||\bRm|+|H||A|^2)ds \\
&\leq& 2\Lambda+c(n) \bigg[ c \eta^{\frac{1}{2(n+1)^2}} \frac{16(n+1)^2}{\d}+(C(\bRm)+64\Lambda^2) \cdot c \eta^{\frac{1}{n+1}} \frac{8(n+1)}{\d} \bigg].
\end{eqnarray*}
Thus we can take $\eta>0$ sufficiently small so that
\[
|A|(t) \leq 3\Lambda, \quad t \in [0,T^{\ast}].
\]
Then we establish the estimate for $\lambda_1(t)$. Since $\lambda_1(0) \geq \d$, Lemma \ref{prs} shows that
\[
\lambda_1(t) \geq \frac{2}{3}\d, \quad t \in [0,t^{\ast}].
\]
Thus the exponential estimate for $\lambda_1$ combining with the exponential decay of $|H|$ imply that
\begin{eqnarray*}
\lambda_1(t) &\geq& \exp \bigg[-\int_{t^\ast}^t(\max_{L_s} |H|^2+C(n) \max_{L_s} |H||A|)ds \bigg] \lambda_1(t^\ast) \\
&\geq& \exp \bigg[-c^2 \eta^{\frac{2}{n+1}} \frac{4(n+1)}{\d}-C(n) \cdot 3\Lambda \cdot c \eta^{\frac{1}{n+1}} \frac{8(n+1)}{\d} \bigg] \lambda_1(t^\ast).
\end{eqnarray*}
If we take $\eta>0$ sufficiently small, then
\[
\lambda_1(t) \geq \frac{1}{2} \d, \quad t \in [0,T^{\ast}].
\]
We can prove a non-collapsing estimate of $L_t$ in the same way as $\lambda_1$, by using the volume ratio estimate.

\vspace{2mm}

\textbf{Step 3. (Exponential convergence of the flow)}:
From Step 2, we have a uniform bound for $A$. So the standard bootstrapping arguments combining with Simon's theorem \cite{Sim83} imply the smooth convergence of the MCF $L_t \to L_{\infty}$. Moreover, we have already seen that for a fixed sufficiently small $\eta>0$, we have
\[
|\na \Psi(t)| \leq C(n,\Lambda,\kappa,r,V_0,\bRm) \cdot \eta^{\frac{1}{n+1}} e^{-\frac{\d t}{4n+4}} \searrow 0.
\]
In particular, Proposition \ref{hdp} implies that $H_t$ converges exponentially fast to $H_{\infty}=0$, and hence $L_{\infty}$ is minimal.

As for the generalized harmonic map flow, we have also the uniform bounds $|\na^l \Psi| \leq C_l$ for all $l \geq 1$. Thus there exists a subsequence $\{\Psi_{t_i}\}$ which converges to a smooth map $\Psi_{\infty} \colon L \to \S^2$ and $L_{\infty}$ inherits a hyper-Lagrangian structure with the complex phase $\Psi_{\infty}$. Since $|\na \Psi_{\infty}|=0$, the map $\Psi_{\infty}$ should be a constant. Finally, we show that the complex phase $\Psi_{\infty}$ which arises from the generalized harmonic map flow does not depend on the choice of the subsequence $\{\Psi_{t_i}\}$ by contradiction. So we assume that there exist two distinct constant phase maps $\Psi_{\infty}$ and $\Psi'_{\infty}$ which arise in this way. We take a small geodesic ball in $B \subset \S^2$ centered at $\Psi_{\infty}$ so that $\Psi'_{\infty} \not\in B$. Since $\{\Psi_{t_i}\}$ converges to $\Psi_{\infty}$ we know that $\Psi_{t_i}(L) \subset B$ for $i$ large enough. We fix such an $i$ and consider the generalized harmonic map flow $\Psi'_t$ starting from the data $(L_{t_i},\Psi_{t_i})$. Then a simple maximum principle argument (\cf Corollary \ref{mpl}) shows that $\Psi'_t(L) \subset B$ for all $t \in [0,\infty)$ whereas $\{\Psi'_t\}$ should have a convergent subsequence to $\Psi'_{\infty} \not\in B$, so contradiction. This completes the proof.
\end{proof}
\newpage

\end{document}